\newtheorem{theorem}{Theorem}[section]
\newtheorem{lemma}[theorem]{Lemma}
\newtheorem{proposition}[theorem]{Proposition}
\theoremstyle{definition}
\newtheorem{definition}[theorem]{Definition}
\newtheorem{example}[theorem]{Example}
\theoremstyle{remark}
\newtheorem{remark}[theorem]{Remark}
\numberwithin{equation}{section}
\begin{document}

\title[Representations of certain normed algebras]{Representations of certain normed algebras}

\author{M.R. Koushesh}
\address{Department of Mathematical Sciences, Isfahan University of Technology, Isfahan 84156--83111, Iran}
\address{School of Mathematics, Institute for Research in Fundamental Sciences (IPM), P.O. Box: 19395--5746, Tehran, Iran}
\email{koushesh@cc.iut.ac.ir}
\thanks{This research was in part supported by a grant from IPM (No. 90030052).}

\subjclass[2010]{Primary 54D35, 46J10, 46J25, 46E25, 46E15; Secondary 54C35, 46H05, 16S60}

%\date{October 17, 2011 and, in revised form, .}

\keywords{Stone--\v{C}ech compactification; Commutative Gelfand--Naimark Theorem; Real Banach algebra; Gelfand theory; Compact support; Lindel\"{o}f; Hewitt realcompactification; Countably compact.}

\begin{abstract}
We show that for a normal locally-${\mathscr P}$ space $X$ (where ${\mathscr P}$ is a topological property subject to some mild requirements) the subset $C_{\mathscr P}(X)$ of $C_b(X)$ consisting of those elements whose support has a neighborhood with ${\mathscr P}$, is a subalgebra of $C_b(X)$ isometrically isomorphic to $C_c(Y)$ for some unique (up to homeomorphism) locally compact Hausdorff space $Y$. The space $Y$ is explicitly constructed as a subspace of the Stone--\v{C}ech compactification $\beta X$ of $X$ and contains $X$ as a dense subspace. Under certain conditions, $C_{\mathscr P}(X)$ coincides with the set of those elements of $C_b(X)$ whose support has ${\mathscr P}$, it moreover becomes a Banach algebra, and simultaneously, $Y$ satisfies $C_c(Y)=C_0(Y)$. This includes the cases when ${\mathscr P}$ is the Lindel\"{o}f property and $X$ is either a locally compact paracompact space or a locally-${\mathscr P}$ metrizable space. In either of the latter cases, if $X$ is non-${\mathscr P}$, $Y$ is non-normal, and $C_{\mathscr P}(X)$ fits properly between $C_0(X)$ and $C_b(X)$; even more, we can fit a chain of ideals of certain length between $C_0(X)$ and $C_b(X)$. The known construction of $Y$ enables us to derive a few further properties of either $C_{\mathscr P}(X)$ or $Y$. Specifically, when ${\mathscr P}$ is the Lindel\"{o}f property and $X$ is a locally-${\mathscr P}$ metrizable space, we show that
\[\dim C_{\mathscr P}(X)=\ell(X)^{\aleph_0},\]
where $\ell(X)$ is the Lindel\"{o}f number of $X$, and when ${\mathscr P}$ is countable compactness and $X$ is a normal space, we show that
\[Y=\mathrm{int}_{\beta X}\upsilon X\]
where $\upsilon X$ is the Hewitt realcompactification of $X$.
\end{abstract}

\maketitle

%\tableofcontents

\section{Introduction}

Throughout this article the underlying field of scalars (which is fixed throughout each discussion) is assumed to be either the real field $\mathbb{R}$ or the complex field $\mathbb{C}$, unless specifically stated otherwise. Also, we will use the term {\em space} to refer only to a topological space; we may assume that spaces are non-empty.

Let $X$ be a space. Denote by $C_b(X)$ the set of all continuous bounded scalar-valued functions on $X$. If $f\in C_b(X)$, the {\em zero-set} of $f$, denoted by $\mathrm{Z}(f)$, is $f^{-1}(0)$, the {\em cozero-set} of $f$, denoted by $\mathrm{Coz}(f)$, is $X\backslash\mathrm{Z}(f)$, and the {\em support} of $f$, denoted by $\mathrm{supp}(f)$, is $\mathrm{cl}_X\mathrm{Coz}(f)$. Let
\[\mathrm{Z}(X)=\big\{\mathrm{Z}(f):f\in C_b(X)\big\}\]
and
\[\mathrm{Coz}(X)=\big\{\mathrm{Coz}(f):f\in C_b(X)\big\}.\]
Denote by $C_0(X)$ the set of all $f\in C_b(X)$ which vanish at infinity (that is, $|f|^{-1}([\epsilon,\infty))$ is compact for each $\epsilon>0$) and denote by $C_c(X)$ the set of all $f\in C_b(X)$ with compact support.

Let ${\mathscr P}$ be a topological property. Then
\begin{itemize}
  \item ${\mathscr P}$ is {\em closed} ({\em open}, respectively) {\em hereditary}, if any closed (open, respectively) subspace of a space with ${\mathscr P}$, also has ${\mathscr P}$.
  \item ${\mathscr P}$ is {\em preserved under finite} ({\em countable}, {\em locally finite} respectively) {\em closed sums}, if any space which is expressible as a finite (countable, locally finite, respectively) union of its closed subspaces each having ${\mathscr P}$, also has ${\mathscr P}$.
\end{itemize}

Let $X$ be a space and let ${\mathscr P}$ be a topological property. The space $X$ is called a {\em ${\mathscr P}$-space} if it has ${\mathscr P}$. A {\em ${\mathscr P}$-subspace} of $X$ is a subspace of $X$ which has ${\mathscr P}$. By a {\em ${\mathscr P}$-neighborhood} of a point (set, respectively) in $X$  we mean a neighborhood of the point (set, respectively) in $X$ having ${\mathscr P}$. The space $X$ is called {\em locally-${\mathscr P}$} if each of its points has a ${\mathscr P}$-neighborhood in $X$. Note that if $X$ is regular and ${\mathscr P}$ is closed hereditary, then $X$ is locally-${\mathscr P}$ if and only if each $x\in X$ has an open neighborhood $U$ in $X$ such that $\mathrm{cl}_XU$ has ${\mathscr P}$.

For other undefined terms and notation we refer to the standard text \cite{E}. (In particular, compact and paracompact spaces are Hausdorff - thus locally compact spaces are completely regular - Lindel\"{o}f spaces are regular, etc.)

The normed subalgebra $C_c(X)$ of $C_b(X)$ consisting of those elements whose support is compact (if $X$ is locally compact, equivalently, consisting of those elements whose support has a compact neighborhood in $X$) is crucial. Here, motivated by our previous work \cite{Ko6} (in which we have studied the Banach algebra of continuous bounded scalar-valued functions with separable support on a locally separable metrizable space $X$) we replace compactness by a rather general topological property ${\mathscr P}$, thus, considering the subset $C_{\mathscr P}(X)$ of $C_b(X)$ consisting of those elements whose support has a ${\mathscr P}$-neighborhood in $X$. Obviously, $C_{\mathscr P}(X)$ is identical to $C_c(X)$ if ${\mathscr P}$ is compactness and $X$ is locally compact. We show that for a normal locally-${\mathscr P}$ space $X$ (with ${\mathscr P}$ subject to some mild requirements) $C_{\mathscr P}(X)$ is a subalgebra of $C_b(X)$ isometrically isomorphic to $C_c(Y)$ for some unique (up to homeomorphism) locally compact space $Y$. The space $Y$, which is explicitly constructed as a subspace of the Stone--\v{C}ech compactification $\beta X$ of $X$, contains $X$ as a dense subspace. Under certain conditions, $C_{\mathscr P}(X)$ coincides with the set of those elements of $C_b(X)$ whose support has ${\mathscr P}$, it becomes moreover a Banach algebra, and at the same time, $Y$ satisfies $C_c(Y)=C_0(Y)$; thus, in particular, in such cases $Y$ is countably compact. This includes the cases when ${\mathscr P}$ is the Lindel\"{o}f property and $X$ is either a locally compact paracompact space or a locally-${\mathscr P}$ metrizable space. In either of the latter cases, if $X$ is non-${\mathscr P}$, then $Y$ is non-normal, and $C_{\mathscr P}(X)$ fits properly between $C_0(X)$ and $C_b(X)$; even more, we can fit a chain of ideals of certain length between $C_0(X)$ and $C_b(X)$. (This shows how differently $C_{\mathscr P}(X)$ may behave for different topological properties ${\mathscr P}$: If ${\mathscr P}$ is compactness, then for any locally-${\mathscr P}$ metrizable space $X$, if $C_{\mathscr P}(X)$ is a Banach algebra then $X$ has ${\mathscr P}$, while, if ${\mathscr P}$ is the Lindel\"{o}f property, there exist some non-${\mathscr P}$ locally-${\mathscr P}$ metrizable spaces $X$ such that $C_{\mathscr P}(X)$ is a Banach algebra.) A few further properties of $Y$ or $C_{\mathscr P}(X)$ are also derived through the known construction of $Y$. Specifically, when ${\mathscr P}$ is the Lindel\"{o}f property and $X$ is a locally-${\mathscr P}$ metrizable space we show that
\[\dim C_{\mathscr P}(X)=\ell(X)^{\aleph_0},\]
where $\ell(X)$ is the Lindel\"{o}f number of $X$, and when ${\mathscr P}$ is countable compactness and $X$ is a normal space we show that
\[Y=\mathrm{int}_{\beta X}\upsilon X,\]
where $\upsilon X$ is the Hewitt realcompactification of $X$. Results of this article are further generalized in the follow-up manuscript \cite{Ko14} available on the arXiv. (See also \cite{Ko11}.)

Let $X$ be a completely regular space. In the recent arXiv preprint \cite{T}, for a filter base ${\mathscr B}$ of open subspaces of $X$, the author studies $C_{\mathscr B}(X)$ defined as the set of all $f\in C(X)$ with support contained in $X\backslash A$ for some $A\in{\mathscr B}$. Also, if ${\mathscr I}$ is an ideal of closed subspaces of $X$, in \cite{AG}, the authors consider $C_{\mathscr I}(X)$ defined as the set of all $f\in C(X)$ with support contained in ${\mathscr I}$. Our approach here is quite different from that of either \cite{T} or \cite{AG}. The interested reader may find it useful to compare our results with those obtained in \cite{T} and \cite{AG}.  (See also \cite{AN} for related results.)

We now review briefly some known facts from General Topology. Additional information on the subject may be found in \cite{E}, \cite{GJ} and \cite{PW}.

\subsection*{1.1. The Stone--\v{C}ech compactification.} Let $X$ be a completely regular space. The {\em Stone--\v{C}ech compactification} $\beta X$ of $X$ is the compactification of $X$ characterized among all compactifications of $X$ by the following property: Every continuous $f:X\rightarrow K$, where $K$ is a compact space, is continuously extendable over $\beta X$; denote by $f_\beta$ this continuous extension of $f$. The Stone--\v{C}ech compactification of a completely regular space always exists. Use will be made in what follows of the following properties of $\beta X$. (See Sections 3.5 and 3.6 of \cite{E}.)
\begin{itemize}
  \item $X$ is locally compact if and only if $X$ is open in $\beta X$.
  \item Any open-closed subspace of $X$ has open-closed closure in $\beta X$.
  \item If $X\subseteq T\subseteq\beta X$ then $\beta T=\beta X$.
  \item If $X$ is normal then $\beta T=\mathrm{cl}_{\beta X}T$ for any closed subspace $T$ of $X$.
\end{itemize}

\subsection*{1.2. The Hewitt realcompactification.} A space is called {\em realcompact} if it is homeomorphic to a closed subspace of some product $\mathbb{R}^\alpha$. Let $X$ be a completely regular space. A {\em realcompactification} of $X$ is a realcompact space containing $X$ as a dense subspace. The {\em Hewitt realcompactification} $\upsilon X$ of $X$ is the realcompactification of $X$ characterized among all realcompactifications of $X$ by the following property: Every continuous $f:X\rightarrow \mathbb{R}$ is continuously extendable over $\upsilon X$. One may assume that $\upsilon X\subseteq\beta X$.

\subsection*{1.3. Paracompact spaces and the Lindel\"{o}f property.} Let $X$ be a space. For open covers $\mathscr{U}$ and $\mathscr{V}$ of $X$ we say that $\mathscr{U}$ is a {\em refinement} of $\mathscr{V}$ (or $\mathscr{U}$ {\em refines} $\mathscr{V}$) if each element of $\mathscr{U}$ is contained in an element of $\mathscr{V}$. An open cover $\mathscr{U}$ of $X$ is called {\em locally finite} if each point of $X$ has a neighborhood in $X$ intersecting only a finite number of the elements of $\mathscr{U}$. The space $X$ is called {\em paracompact} if it is Hausdorff and for every open cover $\mathscr{U}$ of $X$ there exists a locally finite open cover of $X$ which refines $\mathscr{U}$. Every metrizable space and every Lindel\"{o}f space is paracompact and every paracompact space is normal. Any locally compact paracompact space $X$ can be represented as a disjoint union
\[X=\bigcup_{i\in I}X_i,\]
where $I$ is an index set, and $X_i$'s are Lindel\"{o}f open-closed subspaces of $X$. (See Theorem 5.1.27 of \cite{E}.)

\subsection*{1.4. Metrizable spaces and the Lindel\"{o}f property.} The {\em Lindel\"{o}f number} of a space $X$, denoted by $\ell(X)$, is defined by
\[\ell(X)=\min\{\mathfrak{n}:\mbox{any open cover of $X$ has a subcover of cardinality}\leq\mathfrak{n}\}+\aleph_0.\]
In particular, a space $X$ is Lindel\"{o}f if and only if $\ell(X)=\aleph_0$. By a theorem of Alexandroff, any locally Lindel\"{o}f metrizable space $X$ can be represented as a disjoint union
\[X=\bigcup_{i\in I}X_i,\]
where $I$ is an index set, and $X_i$'s are non-empty Lindel\"{o}f open-closed subspaces of $X$. (See Problem 4.4.F of \cite{E}; note that in metrizable spaces the two notions of separability and being Lindel\"{o}f coincide.) Observe that $\ell(X)=|I|$ if $I$ is infinite.

\section{The normed algebra $C_{\mathscr P} (X)$}

We begin our study by considering the general normed algebra $C_{\mathscr P}(X)$ as defined below.

\begin{definition}\label{HWA}
Let $X$ be a space and let ${\mathscr P}$ be a topological property. Define
\[C_{\mathscr P}(X)=\big\{f\in C_b(X):\mathrm{supp}(f)\mbox{ has a ${\mathscr P}$-neighborhood}\big\}.\]
\end{definition}

\begin{remark}
Note that
\[C_{\mathscr P}(X)=C_c(X)\]
if ${\mathscr P}$ is compactness and $X$ is a locally compact space; to see let $f\in C_{\mathscr P}(X)$. Then $\mathrm{supp}(f)$ is compact, as it has a compact neighborhood in $X$. For the converse, suppose that $\mathrm{supp}(g)$ is compact for some $g\in C_b(X)$. For each $x\in X$ let $U_x$ be an open neighborhood of $x$ in $X$ with compact closure $\mathrm{cl}_XU_x$. By compactness of $\mathrm{supp}(g)$ there exist $x_1,\ldots,x_n\in X$ such that
\[\mathrm{supp}(g)\subseteq U_{x_1}\cup\cdots\cup U_{x_n}=U.\]
Now
\[\mathrm{cl}_XU=\mathrm{cl}_XU_{x_1}\cup\cdots\cup\mathrm{cl}_X U_{x_n}\]
is a neighborhood of $\mathrm{supp}(g)$ in $X$ and it is compact, as it is a finite union of compact subspaces of $X$. Thus $g\in C_{\mathscr P}(X)$.
\end{remark}

The following subspace of $\beta X$, introduced in \cite{Ko3} (see also \cite{Ko4}, \cite{Ko13} and \cite{Ko12}), plays a crucial role in what follows.

\begin{definition}\label{RRA}
For a completely regular space $X$ and a topological property ${\mathscr P}$, let
\[\lambda_{\mathscr P} X=\bigcup\big\{\mathrm{int}_{\beta X}\mathrm{cl}_{\beta X}C:C\in\mathrm{Coz}(X)\mbox{ and }\mathrm{cl}_XC \mbox{ has }{\mathscr P}\big\},\]
considered as a subspace of $\beta X$.
\end{definition}

\begin{remark}
Note that in Definition \ref{RRA} we have
\[\lambda_{\mathscr P} X=\bigcup\big\{\mathrm{int}_{\beta X}\mathrm{cl}_{\beta X}Z:Z\in\mathrm{Z}(X)\mbox{ has }{\mathscr P}\big\},\]
provided that ${\mathscr P}$ is a closed hereditary topological property. (See \cite{Ko4}.)
\end{remark}

If $X$ is a space and $D$ is a dense subspace of $X$, then
\[\mathrm{cl}_XU=\mathrm{cl}_X(U\cap D)\]
for every open subspace $U$ of $X$. This will be used in the following simple observation.

\begin{lemma}\label{LKG}
Let $X$ be a completely regular space and let $f:X\rightarrow[0,1]$ be continuous. If $0<r<1$ then
\[f_\beta^{-1}\big[[0,r)\big]\subseteq\mathrm{int}_{\beta X}\mathrm{cl}_{\beta X}f^{-1}\big[[0,r)\big].\]
\end{lemma}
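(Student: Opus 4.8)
The plan is to show that every point of $f_\beta^{-1}[[0,r)]$ has an open neighborhood in $\beta X$ contained in $\mathrm{cl}_{\beta X}f^{-1}[[0,r)]$. So I would fix $p\in\beta X$ with $f_\beta(p)<r$ and pick a real number $s$ with $f_\beta(p)<s<r$. Set
\[V=f_\beta^{-1}\big[[0,s)\big],\]
which is open in $\beta X$ (as $f_\beta$ is continuous and $[0,s)$ is open in $[0,1]$) and contains $p$.

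The key step is to observe that $V$ lies inside $\mathrm{cl}_{\beta X}f^{-1}[[0,r)]$. Since $f_\beta$ restricts to $f$ on $X$, we have
\[V\cap X=f^{-1}\big[[0,s)\big]\subseteq f^{-1}\big[[0,r)\big],\]
using $s<r$. Now $X$ is dense in $\beta X$ and $V$ is open in $\beta X$, so the observation recorded just before the statement (namely $\mathrm{cl}_ZU=\mathrm{cl}_Z(U\cap D)$ for a dense subspace $D$ of a space $Z$ and an open $U\subseteq Z$) applies with $Z=\beta X$, $D=X$, $U=V$, giving
\[\mathrm{cl}_{\beta X}V=\mathrm{cl}_{\beta X}(V\cap X)\subseteq\mathrm{cl}_{\beta X}f^{-1}\big[[0,r)\big].\]
In particular $V\subseteq\mathrm{cl}_{\beta X}V\subseteq\mathrm{cl}_{\beta X}f^{-1}[[0,r)]$.

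Finally, since $V$ is open in $\beta X$ and contained in $\mathrm{cl}_{\beta X}f^{-1}[[0,r)]$, it lies in the interior $\mathrm{int}_{\beta X}\mathrm{cl}_{\beta X}f^{-1}[[0,r)]$; as $p\in V$, this yields $p\in\mathrm{int}_{\beta X}\mathrm{cl}_{\beta X}f^{-1}[[0,r)]$. Since $p$ was an arbitrary point of $f_\beta^{-1}[[0,r)]$, the desired inclusion follows. I do not anticipate a genuine obstacle here: the whole argument is a one-line application of the density fact about closures of open subsets, the only mild care being the choice of the intermediate value $s$ so that the open set $V=f_\beta^{-1}[[0,s)]$ meets $X$ inside $f^{-1}[[0,r)]$ rather than merely inside $f^{-1}[[0,r]]$.
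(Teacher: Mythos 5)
Your argument is correct and is essentially the paper's proof: both rest on the two facts that an open subset of $\beta X$ lies in the interior of its closure and that, by density of $X$, $\mathrm{cl}_{\beta X}V=\mathrm{cl}_{\beta X}(V\cap X)$ for open $V$. The intermediate value $s$ is superfluous, since taking $V=f_\beta^{-1}\big[[0,r)\big]$ directly gives $V\cap X=f^{-1}\big[[0,r)\big]$ exactly (because $f_\beta|X=f$), so the boundary issue you were guarding against does not arise.
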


\begin{proof}
Note that
\[\mathrm{cl}_{\beta X}f_\beta^{-1}\big[[0,r)\big]=\mathrm{cl}_{\beta X}\big(X\cap f_\beta^{-1}\big[[0,r)\big]\big)=\mathrm{cl}_{\beta X}f^{-1}\big[[0,r)\big]\]
and that
\[f_\beta^{-1}\big[[0,r)\big]\subseteq\mathrm{int}_{\beta X}\mathrm{cl}_{\beta X}f_\beta^{-1}\big[[0,r)\big].\]
\end{proof}

The following is a slight modification of Lemma 2.10 of \cite{Ko3}.

\begin{lemma}\label{BBV}
Let $X$ be a completely regular locally-${\mathscr P}$ space, where ${\mathscr P}$ is a closed hereditary topological property. Then
\[X\subseteq\lambda_{\mathscr P} X.\]
\end{lemma}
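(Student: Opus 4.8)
The plan is to show that each point $x \in X$ lies in $\mathrm{int}_{\beta X}\mathrm{cl}_{\beta X}C$ for a suitable cozero-set $C$ of $X$ whose closure in $X$ has $\mathscr{P}$. Since $X$ is regular and $\mathscr{P}$ is closed hereditary, local-$\mathscr{P}$-ness (as noted in the excerpt) gives us an open neighborhood $U$ of $x$ in $X$ with $\mathrm{cl}_X U$ having $\mathscr{P}$. By complete regularity, I can choose a continuous $f: X \to [0,1]$ with $f(x) = 0$ and $f \equiv 1$ on $X \backslash U$. Then $C = f^{-1}[[0,r)]$ for any $0 < r < 1$ is a cozero-set of $X$ containing $x$, and $C \subseteq U$, so $\mathrm{cl}_X C \subseteq \mathrm{cl}_X U$ is a closed subspace of a space with $\mathscr{P}$, hence has $\mathscr{P}$.

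Next I would invoke Lemma \ref{LKG}: since $f_\beta(x) = f(x) = 0 < r$, we have $x \in f_\beta^{-1}[[0,r)] \subseteq \mathrm{int}_{\beta X}\mathrm{cl}_{\beta X} f^{-1}[[0,r)] = \mathrm{int}_{\beta X}\mathrm{cl}_{\beta X} C$. Thus $x$ belongs to one of the sets in the union defining $\lambda_{\mathscr{P}} X$, which gives $x \in \lambda_{\mathscr{P}} X$. Since $x$ was arbitrary, $X \subseteq \lambda_{\mathscr{P}} X$.

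There is no real obstacle here; the lemma is essentially a packaging of complete regularity together with Lemma \ref{LKG}. The one point requiring a little care is ensuring that the chosen $C$ is genuinely a cozero-set \emph{of} $X$ (it is, being $f^{-1}[[0,r)] = (f - r)^{-1}[(-r, 0)]$... more directly, $C = \mathrm{Coz}(g)$ where $g = \max\{r - f, 0\}$, or one simply notes $f^{-1}[[0,r)]$ is a cozero-set since $[0,r)$ is a cozero-set of $[0,1]$ and preimages of cozero-sets under continuous maps are cozero-sets) and that $\mathrm{cl}_X C$ has $\mathscr{P}$, which is where closed-hereditariness of $\mathscr{P}$ enters. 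I would write the argument compactly, citing Lemma \ref{LKG} for the inclusion into $\mathrm{int}_{\beta X}\mathrm{cl}_{\beta X} C$ and citing the regularity remark from the introduction for the existence of $U$ with $\mathrm{cl}_X U \in \mathscr{P}$.
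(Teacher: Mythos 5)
Your proposal is correct and follows essentially the same route as the paper's own proof: take a neighborhood $U$ of $x$ with $\mathrm{cl}_X U$ having ${\mathscr P}$, build $f$ by complete regularity, set $C=f^{-1}[[0,r)]$ (the paper uses $r=1/2$), note $\mathrm{cl}_X C$ has ${\mathscr P}$ by closed-hereditariness, and conclude via Lemma \ref{LKG}. No differences worth noting.
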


\begin{proof}
Let $x\in X$ and let $U$ be an open neighborhood of $x$ in $X$ whose closure $\mathrm{cl}_XU$ has ${\mathscr P}$. Let $f:X\rightarrow[0,1]$ be continuous with
\[f(x)=0\;\;\;\;\mbox{ and }\;\;\;\;f|(X\backslash U)\equiv\mathbf{1}.\]
Let
\[C=f^{-1}\big[[0,1/2)\big]\in\mathrm{Coz}(X).\]
Then $C\subseteq U$ and thus $\mathrm{cl}_XC$ has ${\mathscr P}$, as it is  closed in $\mathrm{cl}_XU$. Therefore
\[\mathrm{int}_{\beta X}\mathrm{cl}_{\beta X}C\subseteq\lambda_{\mathscr P} X.\]
But then $x\in\lambda_{\mathscr P} X$, as $x\in f_\beta^{-1}[[0,1/2)]$ and
\[f_\beta^{-1}\big[[0,1/2)\big]\subseteq\mathrm{int}_{\beta X}\mathrm{cl}_{\beta X}C\]
by Lemma \ref{LKG}.
\end{proof}

\begin{remark}
Note that in Lemma \ref{BBV} the converse also holds. That is, $X$ is locally-${\mathscr P}$ whenever $X\subseteq\lambda_{\mathscr P} X$. (For a proof, modify the argument given in Lemma 2.10 of \cite{Ko3}.) However, we will not have any occasion in the sequel to use the converse statement.
\end{remark}

\begin{definition}\label{WWA}
Let $X$ be a completely regular locally-${\mathscr P}$ space, where ${\mathscr P}$ is a closed hereditary topological property. For any $f\in C_b(X)$ denote \[f_\lambda=f_\beta|\lambda_{\mathscr P} X.\]
\end{definition}

Observe that by Lemma \ref{BBV} the function $f_\lambda$ extends $f$.

\begin{lemma}\label{TES}
Let $X$ be a normal locally-${\mathscr P}$ space, where ${\mathscr P}$ is a closed hereditary topological property preserved under finite closed sums. For any $f\in C_b(X)$ the following are equivalent:
\begin{itemize}
\item[\rm(1)] $f\in C_{\mathscr P}(X)$.
\item[\rm(2)] $f_\lambda\in C_c(\lambda_{\mathscr P} X)$.
\end{itemize}
\end{lemma}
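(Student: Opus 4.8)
The plan is to transfer everything to $\beta X$. Write $S=\mathrm{cl}_{\beta X}\mathrm{Coz}(f)$. Since $X$ is dense in $\beta X$ and $\mathrm{Coz}(f_\beta)$ is open in $\beta X$, the set $\mathrm{Coz}(f)=X\cap\mathrm{Coz}(f_\beta)$ is dense in $\mathrm{Coz}(f_\beta)$, and as $\mathrm{Coz}(f_\lambda)=\lambda_{\mathscr P}X\cap\mathrm{Coz}(f_\beta)$ lies between the two, all three have the same closure $S$ in $\beta X$. Taking closures in the subspaces $X$ and $\lambda_{\mathscr P}X$ then yields
\[\mathrm{supp}(f)=X\cap S,\qquad \mathrm{supp}(f_\lambda)=\lambda_{\mathscr P}X\cap S;\]
in particular, using $X\subseteq\lambda_{\mathscr P}X$ from Lemma \ref{BBV}, we get $\mathrm{supp}(f)\subseteq\mathrm{supp}(f_\lambda)\subseteq S$. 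The whole proof then runs off these two formulas.

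For (1)$\Rightarrow$(2), let $N$ be a ${\mathscr P}$-neighborhood of $\mathrm{supp}(f)$, say $\mathrm{supp}(f)\subseteq U\subseteq N$ with $U$ open in $X$. By normality choose a continuous $g:X\to[0,1]$ with $g|\mathrm{supp}(f)\equiv0$ and $g|(X\backslash U)\equiv\mathbf{1}$, and set $C=g^{-1}[[0,1/2)]\in\mathrm{Coz}(X)$. Then $\mathrm{cl}_XC\subseteq g^{-1}[[0,1)]\subseteq U\subseteq N$, so $\mathrm{cl}_XC$, being closed in $N$, has ${\mathscr P}$ (here the closed-hereditary hypothesis is used), and hence $\mathrm{int}_{\beta X}\mathrm{cl}_{\beta X}C\subseteq\lambda_{\mathscr P}X$ by Definition \ref{RRA}. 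On the other hand $\mathrm{Coz}(f)\subseteq g^{-1}(0)$, so by Lemma \ref{LKG} (with $r=1/2$)
\[S=\mathrm{cl}_{\beta X}\mathrm{Coz}(f)\subseteq\mathrm{cl}_{\beta X}g^{-1}(0)\subseteq g_\beta^{-1}(0)\subseteq g_\beta^{-1}[[0,1/2)]\subseteq\mathrm{int}_{\beta X}\mathrm{cl}_{\beta X}C\subseteq\lambda_{\mathscr P}X.\]
Therefore $\mathrm{supp}(f_\lambda)=\lambda_{\mathscr P}X\cap S=S$, which is closed in $\beta X$ and hence compact; since $f_\lambda$ is also continuous and bounded, $f_\lambda\in C_c(\lambda_{\mathscr P}X)$.

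For (2)$\Rightarrow$(1), the set $K=\mathrm{supp}(f_\lambda)$ is compact and contained in $\lambda_{\mathscr P}X$, so by the definition of $\lambda_{\mathscr P}X$ there are $C_1,\dots,C_n\in\mathrm{Coz}(X)$, each with $\mathrm{cl}_XC_i$ having ${\mathscr P}$, such that $K\subseteq\bigcup_{i=1}^n\mathrm{int}_{\beta X}\mathrm{cl}_{\beta X}C_i$. Put $C=C_1\cup\cdots\cup C_n\in\mathrm{Coz}(X)$; then $\mathrm{cl}_XC=\mathrm{cl}_XC_1\cup\cdots\cup\mathrm{cl}_XC_n$ has ${\mathscr P}$ because ${\mathscr P}$ is preserved under finite closed sums, while $K\subseteq\mathrm{int}_{\beta X}\mathrm{cl}_{\beta X}C$. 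Since $\mathrm{supp}(f)\subseteq K$, the set $V=X\cap\mathrm{int}_{\beta X}\mathrm{cl}_{\beta X}C$ is an open neighborhood of $\mathrm{supp}(f)$ in $X$, and $\mathrm{cl}_XV\subseteq X\cap\mathrm{cl}_{\beta X}C=\mathrm{cl}_XC$ is closed in $\mathrm{cl}_XC$, hence has ${\mathscr P}$. Thus $\mathrm{cl}_XV$ is a ${\mathscr P}$-neighborhood of $\mathrm{supp}(f)$, i.e.\ $f\in C_{\mathscr P}(X)$.

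The routine portion is the closure bookkeeping in $\beta X$ at the start; the step that needs care is the chain $S\subseteq\mathrm{int}_{\beta X}\mathrm{cl}_{\beta X}C$ in (1)$\Rightarrow$(2), where one must upgrade the $X$-level inclusion $\mathrm{supp}(f)\subseteq g^{-1}(0)$ to a statement about $\beta X$-interiors — precisely the service Lemma \ref{LKG} provides. I expect that, together with checking that the cozero-set one manufactures really has closure with ${\mathscr P}$ (which is exactly where the closed-hereditary and finite-closed-sums hypotheses are spent, in the two directions respectively), to be the only non-mechanical point.
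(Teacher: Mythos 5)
Your proof is correct and follows essentially the same route as the paper's: the same Urysohn function together with Lemma \ref{LKG} drives (1)$\Rightarrow$(2), and the same compactness plus finite-closed-sums argument drives (2)$\Rightarrow$(1). The only difference is cosmetic: in (2)$\Rightarrow$(1) you cover $\mathrm{supp}(f_\lambda)$ directly and pass to the union cozero-set $C_1\cup\cdots\cup C_n$, whereas the paper first encloses $\mathrm{supp}(f_\lambda)$ in a closed $\beta X$-neighborhood contained in $\lambda_{\mathscr P}X$ and intersects with $X$ --- both produce a closed ${\mathscr P}$-neighborhood of $\mathrm{supp}(f)$ in the same way.
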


\begin{proof}
(1) {\em  implies} (2). Let $T$ be a ${\mathscr P}$-neighborhood of $\mathrm{supp}(f)$ in $X$. Then $\mathrm{supp}(f)\subseteq\mathrm{int}_X T$. Since $X$ is normal, by the Urysohn Lemma, there exists a continuous $g:X\rightarrow[0,1]$ with
\[g|\mathrm{supp}(f)\equiv \mathbf{0}\;\;\;\;\mbox{ and }\;\;\;\;g|(X\backslash\mathrm{int}_X T)\equiv\mathbf{1}.\]
Let
\[C=g^{-1}\big[[0,1/2)\big]\in\mathrm{Coz}(X).\]
Note that
\[\mathrm{cl}_XC\subseteq g^{-1}\big[[0,1/2]\big]\subseteq T,\]
and thus, $\mathrm{cl}_XC$, being closed in $T$, has ${\mathscr P}$. Therefore
\[\mathrm{int}_{\beta X}\mathrm{cl}_{\beta X}C\subseteq\lambda_{\mathscr P} X.\]
But
\[g_\beta^{-1}\big[[0,1/2)\big]\subseteq\mathrm{int}_{\beta X}\mathrm{cl}_{\beta X}C\]
by Lemma \ref{LKG}, and thus
\[\mathrm{cl}_{\beta X}\mathrm{Coz}(f)\subseteq\mathrm{Z}(g_\beta)\subseteq g_\beta^{-1}\big[[0,1/2)\big]\subseteq\lambda_{\mathscr P} X.\]
This implies that
\begin{eqnarray*}
\mathrm{supp}(f_\lambda)&=&\mathrm{cl}_{\lambda_{\mathscr P} X}\mathrm{Coz}(f_\lambda)\\&=&\mathrm{cl}_{\lambda_{\mathscr P} X}\big(X\cap \mathrm{Coz}(f_\lambda)\big)\\&=&\mathrm{cl}_{\lambda_{\mathscr P} X}\mathrm{Coz}(f)=\lambda_{\mathscr P}X\cap\mathrm{cl}_{\beta X}\mathrm{Coz}(f)=\mathrm{cl}_{\beta X}\mathrm{Coz}(f)
\end{eqnarray*}
is compact.

(2) {\em  implies} (1). Let $V$ be an open neighborhood of $\mathrm{supp}(f_\lambda)$ in $\beta X$ with
\[\mathrm{cl}_{\beta X}V\subseteq\lambda_{\mathscr P}X.\]
(Note that $\lambda_{\mathscr P} X$ is open in $\beta X$ by its definition, and $\beta X$, being compact, is normal.) By compactness, we have
\begin{equation}\label{JB}
\mathrm{cl}_{\beta X}V\subseteq\mathrm{int}_{\beta X}\mathrm{cl}_{\beta X}C_1\cup\cdots\cup\mathrm{int}_{\beta X}\mathrm{cl}_{\beta X}C_n
\end{equation}
for some $C_1,\ldots,C_n\in\mathrm{Coz}(X)$ such that each $\mathrm{cl}_XC_1,\ldots,\mathrm{cl}_XC_n$ has ${\mathscr P}$. Intersecting both sides of (\ref{JB}) with $X$, we have
\[\mathrm{cl}_X(X\cap V)\subseteq X\cap\mathrm{cl}_{\beta X}V\subseteq\mathrm{cl}_XC_1\cup\cdots\cup\mathrm{cl}_XC_n=D.\]
Note that $D$ has ${\mathscr P}$, as it is a finite union of its closed ${\mathscr P}$-subspaces. Therefore $\mathrm{cl}_X(X\cap V)$, being closed in $D$, has ${\mathscr P}$. But $\mathrm{cl}_X(X\cap V)$ is a neighborhood of $\mathrm{supp}(f)$ in $X$, as
\[\mathrm{supp}(f)\subseteq X\cap\mathrm{supp}(f_\lambda)\subseteq X\cap V.\]
\end{proof}

A version of the classical Banach--Stone Theorem states that for any locally compact spaces $X$ and $Y$, the rings $C_c(X)$ and $C_c(Y)$ are isomorphic if and only if the spaces $X$ and $Y$ are homeomorphic. (See \cite{ACG} or \cite{A}.) This will be used in the proof of the following theorem.

\begin{theorem}\label{UUS}
Let $X$ be a normal locally-${\mathscr P}$ space where ${\mathscr P}$ is a closed hereditary topological property preserved under finite closed sums. Then $C_{\mathscr P}(X)$ is a normed subalgebra of $C_b(X)$ isometrically isomorphic to $C_c(Y)$ for some unique (up to homeomorphism) locally compact space $Y$, namely $Y=\lambda_{\mathscr P} X$. Furthermore, $C_{\mathscr P}(X)$ is unital if and only if $X$ has ${\mathscr P}$.
\end{theorem}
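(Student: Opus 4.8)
The plan is to take $Y=\lambda_{\mathscr P}X$ and to realize the isometric isomorphism concretely as the assignment $\Phi\colon f\mapsto f_\lambda$. I would begin by collecting the structural facts about $\lambda_{\mathscr P}X$ that the rest of the argument leans on: by Definition \ref{RRA} it is a union of sets $\mathrm{int}_{\beta X}\mathrm{cl}_{\beta X}C$, hence open in $\beta X$, so it is locally compact (and Hausdorff); by Lemma \ref{BBV} it contains $X$, so it is dense in $\beta X$; and since $X\subseteq\lambda_{\mathscr P}X\subseteq\beta X$ we get $\beta(\lambda_{\mathscr P}X)=\beta X$. Using these I would check that $\Phi$, viewed first as a map $C_b(X)\to C_b(\lambda_{\mathscr P}X)$, is an isometric algebra isomorphism: it is an algebra homomorphism because the continuous extension to $\beta X$ is unique (so $(f+g)_\beta=f_\beta+g_\beta$, $(fg)_\beta=f_\beta g_\beta$, $(cf)_\beta=cf_\beta$, and passing to restrictions on $\lambda_{\mathscr P}X$ respects this); it is isometric because $\lambda_{\mathscr P}X$ is dense in $\beta X$, so $\sup_{\lambda_{\mathscr P}X}|f_\beta|=\sup_{\beta X}|f_\beta|=\|f\|$; it is injective because $X$ is dense in $\lambda_{\mathscr P}X$; and it is onto because $\beta(\lambda_{\mathscr P}X)=\beta X$, so any $h\in C_b(\lambda_{\mathscr P}X)$ extends to $h_\beta$ on $\beta X$ and then $f:=h_\beta|X$ satisfies $f_\beta=h_\beta$ and $\Phi(f)=h$.

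The heart of the matter is then Lemma \ref{TES}: it says $f\in C_{\mathscr P}(X)$ if and only if $\Phi(f)\in C_c(\lambda_{\mathscr P}X)$, so $\Phi$ restricts to a bijection of $C_{\mathscr P}(X)$ onto $C_c(\lambda_{\mathscr P}X)$. Since $C_c(\lambda_{\mathscr P}X)$ is a normed subalgebra of $C_b(\lambda_{\mathscr P}X)$ (using $\mathrm{supp}(f+g)\subseteq\mathrm{supp}(f)\cup\mathrm{supp}(g)$ and $\mathrm{supp}(fg)\subseteq\mathrm{supp}(f)$), and $\Phi$ is an isometric algebra isomorphism, $C_{\mathscr P}(X)=\Phi^{-1}\big(C_c(\lambda_{\mathscr P}X)\big)$ is a normed subalgebra of $C_b(X)$ isometrically isomorphic to $C_c(Y)$ with $Y=\lambda_{\mathscr P}X$ locally compact. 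For uniqueness I would appeal to the version of the Banach--Stone theorem quoted above: if $C_{\mathscr P}(X)$ is ring isomorphic to $C_c(Y')$ for some locally compact $Y'$, then $C_c(\lambda_{\mathscr P}X)\cong C_c(Y')$ as rings, hence $\lambda_{\mathscr P}X$ and $Y'$ are homeomorphic.

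For the final clause, it suffices (again via $\Phi$) to determine when $C_c(\lambda_{\mathscr P}X)$ has an identity. If $e$ is a unit of $C_c(Y)$ for a nonempty locally compact Hausdorff $Y$, then $e(y)=1$ at every point $y$ at which some member of $C_c(Y)$ is nonzero --- that is, at every point of $Y$, since such functions exist as $Y$ is locally compact Hausdorff --- so $e=\mathbf{1}$ and $Y=\mathrm{supp}(e)$ is compact; conversely compactness of $Y$ makes $\mathbf{1}\in C_c(Y)$ an identity. Thus $C_{\mathscr P}(X)$ is unital exactly when $\lambda_{\mathscr P}X$ is compact. If $X$ has ${\mathscr P}$, take $C=X=\mathrm{Coz}(\mathbf{1})$ in Definition \ref{RRA} to get $\lambda_{\mathscr P}X\supseteq\mathrm{int}_{\beta X}\mathrm{cl}_{\beta X}X=\beta X$, so $\lambda_{\mathscr P}X=\beta X$ is compact. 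Conversely, if $\lambda_{\mathscr P}X$ is compact then, being a compact subspace of $\beta X$ containing the dense subspace $X$, it equals $\beta X$; compactness of $\beta X$ then yields $C_1,\dots,C_n\in\mathrm{Coz}(X)$, each with $\mathrm{cl}_XC_i$ having ${\mathscr P}$, such that $\beta X=\bigcup_{i=1}^n\mathrm{int}_{\beta X}\mathrm{cl}_{\beta X}C_i$, and intersecting with $X$ gives $X=\bigcup_{i=1}^n\mathrm{cl}_XC_i$, whence $X$ has ${\mathscr P}$ because ${\mathscr P}$ is preserved under finite closed sums.

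The bulk of the theorem thus reduces to Lemma \ref{TES}; I expect the only points requiring genuine (though modest) care to be the surjectivity of $\Phi$, where the identity $\beta(\lambda_{\mathscr P}X)=\beta X$ is essential, and the equivalence that $\lambda_{\mathscr P}X$ is compact if and only if $X$ has ${\mathscr P}$, needed for the unital assertion.
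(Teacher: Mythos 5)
Your proposal is correct, and its skeleton coincides with the paper's: $Y=\lambda_{\mathscr P}X$, the map $f\mapsto f_\lambda$, Lemma \ref{TES} as the engine, and the Banach--Stone variant for uniqueness. You diverge in two sub-arguments, both legitimately. First, the paper verifies directly that $C_{\mathscr P}(X)$ is a subalgebra (given $f_1,f_2\in C_{\mathscr P}(X)$ it uses normality to shrink the ${\mathscr P}$-neighborhoods $T_i$ to closed neighborhoods $\mathrm{cl}_XU_i\subseteq\mathrm{int}_XT_i$ and then invokes preservation under finite closed sums), whereas you first upgrade $f\mapsto f_\lambda$ to an isometric algebra isomorphism $C_b(X)\to C_b(\lambda_{\mathscr P}X)$ --- using $\beta(\lambda_{\mathscr P}X)=\beta X$ for surjectivity --- and then pull back the standard subalgebra $C_c(\lambda_{\mathscr P}X)$ through the two-way Lemma \ref{TES}; this is more economical, since the normality and finite-closed-sums hypotheses are consumed once inside Lemma \ref{TES} rather than a second time in a separate subalgebra check. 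Second, for the unital clause the paper argues pointwise in $X$ (constructing, for each $x$, a bump $f_x\in C_{\mathscr P}(X)$ with $f_x(x)=1$ to force $u=\mathbf{1}$, whence $X=\mathrm{supp}(u)$ has ${\mathscr P}$), while you transport the question to $C_c(\lambda_{\mathscr P}X)$ and prove that $C_{\mathscr P}(X)$ is unital iff $\lambda_{\mathscr P}X$ is compact iff $X$ has ${\mathscr P}$; your route costs the extra (but easy) compactness equivalence and buys the byproduct that $\lambda_{\mathscr P}X=\beta X$ exactly when $X$ has ${\mathscr P}$. All the individual steps you sketch (the isometry via density of $\lambda_{\mathscr P}X$ in $\beta X$, the surjectivity via $\beta(\lambda_{\mathscr P}X)=\beta X$, and the finite-subcover argument intersected with $X$) check out.
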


\begin{proof}
First, we need to show that $C_{\mathscr P}(X)$ is a subalgebra of $C_b(X)$. Observe that since $X$ is locally-${\mathscr P}$ (and non-empty), there exists a ${\mathscr P}$-subspace of $X$ which constitutes a neighborhood of $\emptyset=\mathrm{supp}(\mathbf{0})$ in $X$. Thus $\mathbf{0}\in C_{\mathscr P}(X)$. To show that $C_{\mathscr P}(X)$ is closed under addition, let $f_i\in C_{\mathscr P}(X)$ where $i=1,2$. For each $i=1,2$ let $T_i$ be a ${\mathscr P}$-neighborhood of $\mathrm{supp}(f_i)$ in $X$ and (using normality of $X$) let $U_i$ be an open neighborhood of $\mathrm{supp}(f_i)$ in $X$ with $\mathrm{cl}_XU_i\subseteq\mathrm{int}_X T_i$. Then $\mathrm{cl}_XU_1\cup\mathrm{cl}_XU_2$ has ${\mathscr P}$, as it is the union of two of its closed subspaces $\mathrm{cl}_XU_1$ and $\mathrm{cl}_XU_2$, and $\mathrm{cl}_XU_i$, for each $i=1,2$, being closed in $T_i$, has ${\mathscr P}$. Note that $\mathrm{cl}_XU_1\cup\mathrm{cl}_XU_2$ is a neighborhood of $\mathrm{supp}(f_1+f_2)$ in $X$, as
\[\mathrm{supp}(f_1+f_2)\subseteq\mathrm{supp}(f_1)\cup\mathrm{supp}(f_2)\subseteq U_1\cup U_2.\]
That $C_{\mathscr P}(X)$ is closed under scalar multiplication and multiplication of its elements may be proved analogously.

Let $Y=\lambda_{\mathscr P} X$ and define
\[\psi:C_{\mathscr P}(X)\rightarrow C_c(Y)\]
by
\[\psi(f)=f_\lambda\]
for any $f\in C_{\mathscr P}(X)$. By Lemma \ref{TES} the function $\psi$ is well-defined. It is clear that $\psi$ is a homomorphism and that $\psi$ is injective. (Note that $X\subseteq Y$ by Lemma \ref{BBV}, and that any two scalar-valued continuous functions on $\lambda_{\mathscr P} X$ coincide, provided that they agree on the dense subspace $X$ of $Y$.) To show that $\psi$ is surjective, let $g\in C_c(Y)$. Then $(g|X)_\lambda=g$ and thus  $g|X\in C_{\mathscr P}(X)$ by Lemma \ref{TES}. Now $\psi(g|X)=g$. To show that $\psi$ is an isometry, let $h\in C_{\mathscr P}(X)$. Then
\[|h_\lambda|[\lambda_{\mathscr P} X]=|h_\lambda|[\mathrm{cl}_{\lambda_{\mathscr P} X}X]\subseteq\mathrm{cl}_{\mathbb{R}}\big(|h_\lambda|[X]\big)=\mathrm{cl}_{\mathbb{R}}\big(|h|[X]\big)\subseteq\big[0,\|h\|\big]\]
which yields $\|h_\lambda\|\leq\|h\|$. That $\|h\|\leq\|h_\lambda\|$ is clear, as $h_\lambda$ extends $h$.

Note that $Y$ is locally compact, as it is open in the compact space $\beta X$.

The uniqueness of $Y$ follows from the fact that for any locally compact space $T$ the ring $C_c(T)$ determines the topology of $T$.

For the second part of the theorem, suppose that $X$ has ${\mathscr P}$. Then the function $\mathbf{1}$ is the unit element of $C_{\mathscr P}(X)$. To show the converse, suppose that $C_{\mathscr P}(X)$ has a unit element $u$. Let $x\in X$. Let $U_x$ and $V_x$ be open neighborhoods of $x$ in $X$ such that $\mathrm{cl}_XV_x\subseteq U_x$ and $\mathrm{cl}_XU_x$ has ${\mathscr P}$. Let $f_x:X\rightarrow[0,1]$ be continuous and such that
\[f_x(x)=1\;\;\;\;\mbox{ and }\;\;\;\;f_x|(X\backslash V_x)\equiv\mathbf{0}.\]
Then $\mathrm{cl}_XU_x$ is a neighborhood of $\mathrm{supp}(f_x)$, as $\mathrm{supp}(f_x)\subseteq\mathrm{cl}_XV_x$. Therefore $f_x\in C_{\mathscr P}(X)$. Since
\[u(x)=u(x)f_x(x)=f_x(x)=1\]
we have $u=\mathbf{1}$. Thus $X=\mathrm{supp}(u)$ has ${\mathscr P}$.
\end{proof}

\begin{example}\label{QLL}
The list of topological properties satisfying the assumption of Theorem \ref{UUS} is quite long and include almost all important covering properties (that is, topological properties described in terms of the existence of certain kinds of open subcovers or refinements of a given open cover of a certain type), among them are: compactness, countable compactness (more generally, $[\theta,\kappa]$-compactness), the Lindel\"{o}f property (more generally, the $\mu$-Lindel\"{o}f property), paracompactness, metacompactness, countable paracompactness, subparacompactness, submetacompactness (or $\theta$-refinability), the $\sigma$-para-Lindel\"{o}f property and also $\alpha$-boundedness. (See \cite{Bu} and \cite{Steph} for the definitions. That these topological properties - except for the last one - are closed hereditary and preserved under finite closed sums, follow from Theorems 7.1, 7.3 and 7.4 of \cite{Bu}; for $\alpha$-boundedness, this directly follows from its definition. Recall that a space $X$ is {\em $\alpha$-bounded}, where $\alpha$ is an infinite cardinal, if every subspace of $X$ of cardinality $\leq\alpha$ has compact closure in $X$.)
\end{example}

\begin{remark}
Let ${\mathscr P}$ be a topological property. Then ${\mathscr P}$ is {\em finitely additive}, if any space which is expressible as a finite disjoint union of its closed ${\mathscr P}$-subspaces has ${\mathscr P}$. Also, ${\mathscr P}$ is {\em invariant under perfect mappings} ({\em  inverse invariant under perfect mappings}, respectively) if for every perfect surjective mapping $f:X\rightarrow Y$, the space $Y$ ($X$, respectively) has ${\mathscr P}$, provided that $X$ ($Y$, respectively) has ${\mathscr P}$. If ${\mathscr P}$ is both invariant and inverse invariant under perfect mappings then it is {\em perfect}. (A closed continuous mapping $f:X\rightarrow Y$ is {\em perfect}, if each fiber $f^{-1}(y)$, where $y\in Y$, is a compact subspace of $X$.) Any finitely additive topological property which is invariant under perfect mappings is preserved under finite closed sums. (See Theorem 3.7.22 of \cite{E}.) Also, any topological property which is hereditary with respect to open-closed subspaces and is inverse invariant under perfect mappings, is hereditary with respect to closed subspaces. (See Theorem 3.7.29 of \cite{E}.) Therefore, the assumption that ``${\mathscr P}$ is closed hereditary and preserved under finite closed sums" in Lemma \ref{TES} and Theorem \ref{UUS} may be replaces by ``${\mathscr P}$ is open-closed hereditary, finitely additive and perfect".
\end{remark}

\section{The Banach algebra $C_{\mathscr P}(X)$}

In this section we turn our attention to the case in which $C_{\mathscr P}(X)$ becomes a Banach algebra. It is interesting that in spite of the fact that $C_{\mathscr P}(X)$ is demanded to have a richer structure, it turns out to be better expressible, and at the same time, $\lambda_{\mathscr P} X$ reveals nicer properties. These are all more precisely expressed in the statement of our next result.

Let $X$ be a locally compact non-compact space. It is known that $C_0(X)=C_c(X)$ if and only if every $\sigma$-compact subspace of $X$ is contained in a compact subspace of $X$. (See Problem 7G.2 of \cite{GJ}.) In particular, $C_0(X)=C_c(X)$ implies that $X$ is countably compact (recall that a space $T$ is countably compact if and only if each countably infinite subspace of $T$ has an accumulation point; see Theorem 3.10.3 of \cite{E}) and thus, non-Lindel\"{o}f and non-paracompact, as every countably compact space which is either Lindel\"{o}f or paracompact is necessarily compact. (See Theorems 3.11.1 and 5.1.20 of \cite{E}; observe that Lindel\"{o}f spaces are realcompact and completely regular countably compact spaces are pseudocompact; see Theorems 3.11.12 and 3.10.20 of \cite{E}.) These will be used in the proof of the following.

\begin{theorem}\label{YUS}
Let $X$ be a normal locally-${\mathscr P}$ space where ${\mathscr P}$ is a closed hereditary topological property preserved under countable closed sums. Moreover, suppose that the closure of each ${\mathscr P}$-subspace of $X$ has a ${\mathscr P}$-neighborhood. Then $C_{\mathscr P}(X)$ is a Banach subalgebra of $C_b(X)$ isometrically isomorphic to $C_c(Y)$ for some unique (up to homeomorphism) locally compact space $Y$, namely $Y=\lambda_{\mathscr P} X$. Moreover
\begin{itemize}
  \item $C_{\mathscr P}(X)=\{f\in C_b(X):\mathrm{supp}(f)\mbox{ has }{\mathscr P}\}$.
  \item $C_c(Y)=C_0(Y)$.
  \item $Y$ is countably compact.
  \item $Y$ is neither Lindel\"{o}f nor paracompact, if $X$ is non-${\mathscr P}$.
\end{itemize}
\end{theorem}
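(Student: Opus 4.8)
The plan is to build on Theorem \ref{UUS}, whose hypotheses are implied by those of the present statement (closure under countable closed sums implies closure under finite closed sums), so that $C_{\mathscr P}(X)$ is already known to be a normed subalgebra of $C_b(X)$ isometrically isomorphic to $C_c(Y)$ with $Y=\lambda_{\mathscr P}X$ locally compact. What must be added is: (i) the identity $C_{\mathscr P}(X)=\{f\in C_b(X):\mathrm{supp}(f)\text{ has }{\mathscr P}\}$; (ii) completeness; (iii) $C_c(Y)=C_0(Y)$; (iv) countable compactness of $Y$; and (v) the non-Lindel\"of/non-paracompact conclusion when $X$ is non-${\mathscr P}$. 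I would prove these roughly in the order (i), (iii), (iv), (v), (ii), since completeness will fall out most cleanly once we know $C_{\mathscr P}(X)\cong C_0(Y)$.

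For (i), one inclusion is trivial: if $\mathrm{supp}(f)$ has a ${\mathscr P}$-neighborhood then certainly $\mathrm{supp}(f)$ is a ${\mathscr P}$-subspace (closed hereditary) — wait, that is the wrong direction; the trivial inclusion is that $f\in C_{\mathscr P}(X)$ gives $\mathrm{supp}(f)$ with a ${\mathscr P}$-neighborhood, hence $\mathrm{supp}(f)$ itself, being closed in that neighborhood, has ${\mathscr P}$. For the reverse inclusion, suppose $\mathrm{supp}(f)$ has ${\mathscr P}$; then by the new hypothesis its closure (which is $\mathrm{supp}(f)$ itself) has a ${\mathscr P}$-neighborhood, so $f\in C_{\mathscr P}(X)$. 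This is short. For (iii) and (iv), the cleanest route is via the characterization recalled just before the theorem: for locally compact $Y$, $C_0(Y)=C_c(Y)$ iff every $\sigma$-compact subspace of $Y$ sits inside a compact one. So I would take a $\sigma$-compact $S=\bigcup_{n}K_n\subseteq Y=\lambda_{\mathscr P}X$ with each $K_n$ compact, cover each $K_n$ by finitely many basic sets $\mathrm{int}_{\beta X}\mathrm{cl}_{\beta X}C$ with $\mathrm{cl}_XC$ having ${\mathscr P}$, collect the countably many cozero-sets $C_{n,j}$ so obtained, and set $D=\bigcup_{n,j}\mathrm{cl}_X C_{n,j}$; by closure under countable closed sums $D$ has ${\mathscr P}$. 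Then $C=\bigcup_{n,j}C_{n,j}$ is a cozero-set with $\mathrm{cl}_XC=D$ having ${\mathscr P}$, so $\mathrm{int}_{\beta X}\mathrm{cl}_{\beta X}C\subseteq\lambda_{\mathscr P}X$, and I claim $\mathrm{cl}_{\beta X}C$ is the desired compact superset of $S$ inside $Y$. One checks $K_n\subseteq\mathrm{cl}_{\beta X}C$ using that $K_n$ was covered by the interiors $\mathrm{int}_{\beta X}\mathrm{cl}_{\beta X}C_{n,j}\subseteq\mathrm{cl}_{\beta X}C$; and $\mathrm{cl}_{\beta X}C$ is compact (closed in $\beta X$) and contained in $\lambda_{\mathscr P}X$. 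Hence $C_c(Y)=C_0(Y)$, and then countable compactness of $Y$ follows from the quoted fact that $C_0=C_c$ on a locally compact space forces countable compactness.

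For (v), once $Y$ is countably compact, invoke the remarks recalled before the theorem: a countably compact space that is Lindel\"of or paracompact is compact; so if $Y$ were Lindel\"of or paracompact it would be compact, whence $C_c(Y)=C(Y)$ is unital, whence by the last sentence of Theorem \ref{UUS} $X$ has ${\mathscr P}$ — contradicting the hypothesis that $X$ is non-${\mathscr P}$. Finally, for completeness (ii): $C_0(Y)$ is always a closed subalgebra of $C_b(Y)$ in the sup norm (standard — a uniform limit of functions vanishing at infinity vanishes at infinity), and $C_0(Y)=C_c(Y)\cong C_{\mathscr P}(X)$ isometrically, so $C_{\mathscr P}(X)$ is complete, i.e.\ a Banach algebra. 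The main obstacle I anticipate is the bookkeeping in (iii): one must be careful that the countable union $C=\bigcup C_{n,j}$ is genuinely a cozero-set of $X$ (a countable union of cozero-sets is a cozero-set) and that $\mathrm{cl}_XC=\bigcup\mathrm{cl}_XC_{n,j}$ — the latter needs the local finiteness or at least that the union of closures equals the closure of the union, which here holds because we can arrange $C=\bigcup C_{n,j}$ with $\mathrm{cl}_X C=\mathrm{cl}_X\bigcup C_{n,j}\supseteq\bigcup\mathrm{cl}_X C_{n,j}$, and the reverse inclusion is automatic only if the family is locally finite; if not, one instead argues that $\mathrm{cl}_XC$, being a closed subspace of the ${\mathscr P}$-space $D'=\mathrm{cl}_X\bigcup_{n,j}C_{n,j}$ — hmm, this is circular, so more care is needed: the correct move is to note $\mathrm{cl}_X C\subseteq \bigcup_{n,j}\mathrm{cl}_X C_{n,j}$ fails in general, so instead replace each $C_{n,j}$ by a slightly larger cozero-set whose closure still has ${\mathscr P}$ and lies in a fixed ${\mathscr P}$-neighborhood, using the hypothesis that closures of ${\mathscr P}$-subspaces have ${\mathscr P}$-neighborhoods, so that the relevant closed sum is taken over the ${\mathscr P}$-neighborhoods rather than the bare closures. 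This is the step that will require the extra hypothesis on ${\mathscr P}$-neighborhoods and where I expect the real work to lie.
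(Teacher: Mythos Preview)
Your overall architecture matches the paper's exactly: reduce to Theorem~\ref{UUS}, then establish the $\sigma$-compact-into-compact property for $Y=\lambda_{\mathscr P}X$ to get $C_c(Y)=C_0(Y)$, and read off completeness, countable compactness, and the non-Lindel\"of/non-paracompact clause from that. Parts (i), (ii), (iv) are fine; your argument for (v) via unitality and the last clause of Theorem~\ref{UUS} is a clean variant of the paper's direct finite-cover argument and works perfectly well.

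The genuine gap is in (iii), and it is a bit worse than you diagnose. There are \emph{two} problems with taking $C=\bigcup_{n,j}C_{n,j}$ and using $\mathrm{cl}_{\beta X}C$ as your compact superset. First, as you note, $\mathrm{cl}_XC$ need not equal $\bigcup_{n,j}\mathrm{cl}_XC_{n,j}$, so you cannot conclude $\mathrm{cl}_XC$ has ${\mathscr P}$. Second---and this you do not mention---even if $\mathrm{cl}_XC$ did have ${\mathscr P}$, the definition of $\lambda_{\mathscr P}X$ only gives $\mathrm{int}_{\beta X}\mathrm{cl}_{\beta X}C\subseteq\lambda_{\mathscr P}X$, not $\mathrm{cl}_{\beta X}C\subseteq\lambda_{\mathscr P}X$; so your proposed compact set might leak out of $Y$. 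Your suggested per-$C_{n,j}$ enlargement does not repair either issue: enlarging each piece still leaves you taking a countable union whose closure you cannot control, and still leaves you with only an interior inside $\lambda_{\mathscr P}X$.

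The paper's fix handles both at once, and you should apply the extra hypothesis to the \emph{whole} union rather than piecewise. Set $E=\bigcup_{n,j}\mathrm{cl}_XC_{n,j}$; this has ${\mathscr P}$ by the countable-closed-sum hypothesis. Now invoke the neighborhood hypothesis once: $\mathrm{cl}_XE$ has a ${\mathscr P}$-neighborhood $T$ in $X$. By Urysohn take $f:X\to[0,1]$ with $f|\mathrm{cl}_XE\equiv 0$ and $f|(X\setminus\mathrm{int}_XT)\equiv 1$, and let $C=f^{-1}[[0,1/2))$. Then $\mathrm{cl}_XC\subseteq T$ has ${\mathscr P}$ (closed in $T$), so $\mathrm{int}_{\beta X}\mathrm{cl}_{\beta X}C\subseteq\lambda_{\mathscr P}X$; and by Lemma~\ref{LKG}, $\mathrm{cl}_{\beta X}C_{n,j}\subseteq\mathrm{Z}(f_\beta)\subseteq f_\beta^{-1}[[0,1/2))\subseteq\lambda_{\mathscr P}X$ for every $n,j$. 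Thus each $K_n\subseteq\mathrm{Z}(f_\beta)$, and $\mathrm{Z}(f_\beta)$---not $\mathrm{cl}_{\beta X}C$---is your compact subset of $Y$ containing $S$. The passage through a \emph{single} Urysohn cozero-set, with the zero-set $\mathrm{Z}(f_\beta)$ as the compact witness, is the device that simultaneously controls the closure and keeps everything inside $\lambda_{\mathscr P}X$.
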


\begin{proof}
By Theorem \ref{UUS} we know that $C_{\mathscr P}(X)$ is a normed subalgebra of $C_b(X)$ isometrically isomorphic to $C_c(Y)$ for some unique locally compact space $Y=\lambda_{\mathscr P} X$. To prove that $C_{\mathscr P}(X)$ is a Banach algebra it then suffices to show that $C_c(Y)=C_0(Y)$.

Next, note that if $f\in C_{\mathscr P}(X)$, then $\mathrm{supp}(f)$ has ${\mathscr P}$, as it is closed in a ${\mathscr P}$-neighborhood in $X$. For the converse, note that if $f\in C_b(X)$ is such that $\mathrm{supp}(f)$ has ${\mathscr P}$, than $\mathrm{supp}(f)$ has a ${\mathscr P}$-neighborhood by our assumption.

To show that $C_c(Y)=C_0(Y)$, let $A$ be a $\sigma$-compact subspace of $Y$. Then
\[A=A_1\cup A_2\cup\cdots\]
where each $A_1,A_2,\ldots$ is compact. For each $n=1,2,\ldots$ by compactness of $A_n$ we have
\begin{equation}\label{DSD}
A_n\subseteq\mathrm{int}_{\beta X}\mathrm{cl}_{\beta X}C^n_1\cup\cdots\cup\mathrm{int}_{\beta X}\mathrm{cl}_{\beta X}C^n_{k_n}
\end{equation}
for some $C^n_1,\ldots,C^n_{k_n}\in\mathrm{Coz}(X)$ such that each $\mathrm{cl}_XC^n_1,\ldots,\mathrm{cl}_XC^n_{k_n}$ has ${\mathscr P}$. Note that
\[E=\bigcup_{n=1}^\infty\bigcup_{i=1}^{k_n}\mathrm{cl}_XC^n_i\]
has ${\mathscr P}$, as it is the countable union of its closed ${\mathscr P}$-subspaces. By our assumption, there exists a ${\mathscr P}$-neighborhood $T$ of $\mathrm{cl}_X E$ in $X$. Since $X$ is normal by the Urysohn Lemma there exists a continuous $f:X\rightarrow[0,1]$ with
\[f|\mathrm{cl}_X E\equiv \mathbf{0}\;\;\;\;\mbox{ and }\;\;\;\;f|(X\backslash\mathrm{int}_X T)\equiv \mathbf{1}.\]
Let
\[C=f^{-1}\big[[0,1/2)\big]\in\mathrm{Coz}(X).\]
Note that
\[\mathrm{cl}_XC\subseteq f^{-1}\big[[0,1/2]\big]\subseteq T,\]
and thus, $\mathrm{cl}_XC$, being closed in $T$, has ${\mathscr P}$. Therefore
\[\mathrm{int}_{\beta X}\mathrm{cl}_{\beta X}C\subseteq\lambda_{\mathscr P} X.\]
But
\[f_\beta^{-1}\big[[0,1/2)\big]\subseteq\mathrm{int}_{\beta X}\mathrm{cl}_{\beta X}C\]
by Lemma \ref{LKG}. Thus
\begin{equation}\label{TY}
\mathrm{cl}_{\beta X}C^n_i\subseteq\mathrm{Z}(f_\beta)\subseteq f_\beta^{-1}\big[[0,1/2)\big]\subseteq\lambda_{\mathscr P} X
\end{equation}
for each $n=1,2,\ldots$ and $i=1,\ldots,k_n$. From (\ref{DSD}) and (\ref{TY}), it then follows that $A_n\subseteq\mathrm{Z}(f_\beta)$ for each $n=1,2,\ldots$. Therefore $\mathrm{Z}(f_\beta)$ is a compact subspaces of $\lambda_{\mathscr P} X$ containing $A$.

To conclude the proof, note that $Y$ is countably compact, as $C_c(Y)=C_0(Y)$. If $Y$ is in addition either Lindel\"{o}f or paracompact, then it is compact. Compactness of $Y$ now implies that
\[\lambda_{\mathscr P} X=\mathrm{int}_{\beta X}\mathrm{cl}_{\beta X}C_1\cup\cdots\cup\mathrm{int}_{\beta X}\mathrm{cl}_{\beta X}C_n\]
for some $C_1,\ldots,C_n\in\mathrm{Coz}(X)$ such that each $\mathrm{cl}_XC_1,\ldots,\mathrm{cl}_XC_n$ has ${\mathscr P}$. Since $X$ is locally-${\mathscr P}$, we have $X\subseteq\lambda_{\mathscr P} X$ by Lemma \ref{BBV}, from which it then follows that
\[X=\mathrm{cl}_XC_1\cup\cdots\cup\mathrm{cl}_XC_n,\]
being the finite union of its closed ${\mathscr P}$-subspaces, has ${\mathscr P}$.
\end{proof}

\begin{remark}\label{JIJ}
Suppose that the underlying field of scalars is $\mathbb{C}$. In Theorem \ref{YUS} we have proved that $C_{\mathscr P}(X)$ is a Banach algebra isometrically isomorphic to $C_0(Y)$ for some locally compact space $Y$ ($=\lambda_{\mathscr P} X$). On the other hand, by the commutative Gelfand--Naimark Theorem, we know that $C_{\mathscr P}(X)$ is isometrically isomorphic to $C_0(Y')$, with the locally compact space $Y'$ being the spectrum of $C_{\mathscr P}(X)$. Thus $C_0(Y)$ and $C_0(Y')$ are isometrically isomorphic, which implies that the spaces $Y$ and $Y'$ are homeomorphic. In particular, this shows that $\lambda_{\mathscr P} X$ coincides with the spectrum of $C_{\mathscr P}(X)$. (Recall that, by a version of the classical Banach--Stone Theorem, for any locally compact spaces $X$ and $Y$, the Banach algebras $C_0(X)$ and $C_0(Y)$ are isometrically isomorphic if and only if the spaces $X$ and $Y$ are homeomorphic; see Theorem 7.1 of \cite{Be}.)
\end{remark}

\begin{remark}\label{GGJ}
Note that in Theorem \ref{YUS} the space $Y$ is non-${\mathscr P}$ for any topological property ${\mathscr P}$ such that
\[\mbox{${\mathscr P}$ $+$ countable compactness $\rightarrow$ compactness.}\]
The list of such topological properties is quite long; it includes (in addition to the Lindel\"{o}f property and paracompactness themselves): realcompactness, metacompactness, subparacompactness, submetacompactness (or $\theta$-refinability), the meta-Lindel\"{o}f property, the submeta-Lindel\"{o}f property (or $\delta\theta$-refinability), weak submetacompactness (or weak $\theta$-refinability) and the weak submeta-Lindel\"{o}f property (or weak $\delta\theta$-refinability) among others. (See Parts 6.1 and 6.2 of \cite{Va}.)
\end{remark}

\section{The case when ${\mathscr P}$ is the Lindel\"{o}f property}

In this section we confine ourselves to the case when ${\mathscr P}$ is the Lindel\"{o}f property. This consideration leads to some improvements in Theorem \ref{YUS}.
Part of the results of this section (Lemmas \ref{PDS}, \ref{OPS} and \ref{RTS} and part of Theorem \ref{RTF}) are slight modifications of certain results from \cite{Ko6}. The proofs are given here for reader's convenience and completeness of results.

We begin with the following observation.

\begin{proposition}\label{GGS}
Let ${\mathscr P}$ be the Lindel\"{o}f property. Let $X$ be a normal locally-${\mathscr P}$ space. Then
\[C_{\mathscr P}(X)=\big\{f\in C_b(X):\mathrm{supp}(f)\mbox{ has }{\mathscr P}\big\}.\]
\end{proposition}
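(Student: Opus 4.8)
The plan is to prove the two inclusions of the asserted equality separately; only the inclusion from right to left carries content, and it reduces to a short covering argument special to the Lindel\"of property.

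First I would dispose of the inclusion $C_{\mathscr P}(X)\subseteq\{f\in C_b(X):\mathrm{supp}(f)\text{ has }{\mathscr P}\}$. If $f\in C_{\mathscr P}(X)$, then by definition $\mathrm{supp}(f)$ has a ${\mathscr P}$-neighborhood $N$ in $X$; since $\mathrm{supp}(f)$ is closed in $X$ it is closed in $N$, and a closed subspace of a Lindel\"of space is Lindel\"of, whence $\mathrm{supp}(f)$ has ${\mathscr P}$. This half uses only that ${\mathscr P}$ is closed hereditary, and needs no hypothesis on $X$.

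For the reverse inclusion I would take $f\in C_b(X)$ with $L:=\mathrm{supp}(f)$ Lindel\"of and produce a ${\mathscr P}$-neighborhood of $L$ in $X$. As $X$ is normal it is regular, so by the observation following the definition of locally-${\mathscr P}$ in the Introduction (using that ${\mathscr P}$ is closed hereditary) each $x\in X$ has an open neighborhood $U_x$ in $X$ with $\mathrm{cl}_XU_x$ having ${\mathscr P}$. The family $\{U_x:x\in L\}$ is an open cover of the Lindel\"of space $L$, so some countable subfamily $U_{x_1},U_{x_2},\ldots$ already covers $L$. Setting $T=\bigcup_{n=1}^\infty\mathrm{cl}_XU_{x_n}$, the set $T$ has ${\mathscr P}$, being a countable union of its closed ${\mathscr P}$-subspaces, and $T$ is a neighborhood of $L$ in $X$ because $L\subseteq\bigcup_{n=1}^\infty U_{x_n}\subseteq T$ with $\bigcup_{n=1}^\infty U_{x_n}$ open in $X$. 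Hence $L$ has a ${\mathscr P}$-neighborhood and $f\in C_{\mathscr P}(X)$.

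I do not anticipate a real obstacle. The entire point is that for the Lindel\"of property the implication ``$\mathrm{supp}(f)$ has ${\mathscr P}$'' $\Longrightarrow$ ``$\mathrm{supp}(f)$ has a ${\mathscr P}$-neighborhood'' is automatic: a Lindel\"of closed set is covered by \emph{countably many} of the basic neighborhoods whose closures lie in ${\mathscr P}$, and ${\mathscr P}$ absorbs countable closed unions; this is exactly why Proposition \ref{GGS} holds for every normal locally-${\mathscr P}$ space, without the extra hypothesis required in Theorem \ref{YUS}. The only minor points to watch are that $\mathrm{supp}(f)$ is genuinely closed (legitimizing ``closed in $N$'' above) and that $\bigcup_{n=1}^\infty U_{x_n}$ is open (so that $T$ is a genuine neighborhood of $L$ rather than merely a superset).
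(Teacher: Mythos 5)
Your proof is correct and follows essentially the same route as the paper: cover the Lindel\"of support by countably many of the open sets $U_x$ with Lindel\"of closures and use that a countable union of closed Lindel\"of subspaces is Lindel\"of. The only difference is cosmetic: the paper additionally shrinks, via normality, to a closed neighborhood $\mathrm{cl}_X V$ contained in the union of the closures $\mathrm{cl}_X U_{x_n}$, whereas you use that union itself as the ${\mathscr P}$-neighborhood --- equally valid, since the definition of $C_{\mathscr P}(X)$ does not require the neighborhood to be closed.
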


\begin{proof}
Observe that if $f\in C_{\mathscr P}(X)$ then $\mathrm{supp}(f)$ has ${\mathscr P}$, as it is closed in a ${\mathscr P}$-neighborhood in $X$.

Next, suppose that $f\in C_b(X)$ and that $\mathrm{supp}(f)$ has ${\mathscr P}$. For each $x\in\mathrm{supp}(f)$, let $U_x$ be an open neighborhood of $x$ in $X$ such that the closure $\mathrm{cl}_X U_x$ has ${\mathscr P}$. Since
\[\big\{U_x:x\in\mathrm{supp}(f)\big\}\]
is an open cover of $\mathrm{supp}(f)$, there exist some $x_1,x_2,\ldots\in\mathrm{supp}(f)$ such that
\[\mathrm{supp}(f)\subseteq U_{x_1}\cup U_{x_2}\cup\cdots=W.\]
By normality of $X$, there exists an open subspace $V$ of $X$ with
\[\mathrm{supp}(f)\subseteq V\subseteq\mathrm{cl}_X V\subseteq W.\]
Now $\mathrm{cl}_X V$ is contained in \[H=\mathrm{cl}_XU_{x_1}\cup\mathrm{cl}_XU_{x_2}\cup\cdots\]
as a closed subspace. Since $H$ has ${\mathscr P}$, it follows that $\mathrm{cl}_X V$ has ${\mathscr P}$. That is, $\mathrm{cl}_X V$ is a ${\mathscr P}$-neighborhood of $\mathrm{supp}(f)$ in $X$. Therefore $f\in C_{\mathscr P}(X)$.
\end{proof}

\begin{lemma}\label{PDS}
Let ${\mathscr P}$ be the Lindel\"{o}f property. Let $X$ be a completely regular space representable as a disjoint union
\[X=\bigcup_{i\in I}X_i,\]
such that $X_i$'s are open-closed ${\mathscr P}$-subspaces of $X$. Then
\[\lambda_{\mathscr P} X=\bigcup\Big\{\mathrm{cl}_{\beta X}\Big(\bigcup_{i\in J}X_i\Big):J\subseteq I\mbox{ is countable}\Big\}.\]
\end{lemma}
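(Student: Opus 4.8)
The plan is to prove the two inclusions separately, exploiting the fact that the $X_i$ are open-closed in $X$ so that for any $J\subseteq I$ the union $\bigcup_{i\in J}X_i$ is open-closed in $X$, and hence (by the second listed property of $\beta X$ in §1.1) $\mathrm{cl}_{\beta X}\bigl(\bigcup_{i\in J}X_i\bigr)$ is open-closed in $\beta X$; in particular it is contained in $\mathrm{int}_{\beta X}\mathrm{cl}_{\beta X}\bigl(\bigcup_{i\in J}X_i\bigr)$, and when $J$ is countable $\bigcup_{i\in J}X_i$ is a countable union of Lindel\"{o}f closed subspaces, hence Lindel\"{o}f, so (taking a cozero-set equal to this open-closed set, e.g. $C=\mathrm{Coz}(f)$ for a suitable $f$, or simply noting the set is itself a cozero-set of $X$ since it is open-closed in a normal-enough space — more safely, using the characterization of $\lambda_{\mathscr P}X$ via zero-sets valid because ${\mathscr P}$ is closed hereditary, together with the fact that an open-closed Lindel\"{o}f subspace is a zero-set) one gets $\mathrm{cl}_{\beta X}\bigl(\bigcup_{i\in J}X_i\bigr)\subseteq\lambda_{\mathscr P}X$. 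This handles the inclusion $\supseteq$.

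For the reverse inclusion $\subseteq$, I would take a basic piece $\mathrm{int}_{\beta X}\mathrm{cl}_{\beta X}C$ of $\lambda_{\mathscr P}X$, where $C\in\mathrm{Coz}(X)$ and $\mathrm{cl}_XC$ is Lindel\"{o}f, and show it is contained in $\mathrm{cl}_{\beta X}\bigl(\bigcup_{i\in J}X_i\bigr)$ for some countable $J$. The natural choice is $J=\{i\in I:X_i\cap C\neq\emptyset\}$. First I would argue that $J$ is countable: the sets $\{X_i\cap C: i\in J\}$ form a pairwise-disjoint family of nonempty open subsets of the Lindel\"{o}f space $\mathrm{cl}_XC$ (indeed of $C$ itself), and a disjoint open family in a Lindel\"{o}f space is countable (cover $\mathrm{cl}_XC$ by these sets together with $\mathrm{cl}_XC\setminus\mathrm{cl}_XC$... more precisely, a Lindel\"{o}f space has countable cellularity). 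Then $C\subseteq\bigcup_{i\in J}X_i$, so $\mathrm{cl}_{\beta X}C\subseteq\mathrm{cl}_{\beta X}\bigl(\bigcup_{i\in J}X_i\bigr)$, and a fortiori $\mathrm{int}_{\beta X}\mathrm{cl}_{\beta X}C\subseteq\mathrm{cl}_{\beta X}\bigl(\bigcup_{i\in J}X_i\bigr)$. Taking the union over all such $C$ gives $\lambda_{\mathscr P}X\subseteq\bigcup\{\mathrm{cl}_{\beta X}(\bigcup_{i\in J}X_i):J\subseteq I\text{ countable}\}$.

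The main obstacle I anticipate is the $\supseteq$ direction — specifically, confirming that $\mathrm{cl}_{\beta X}\bigl(\bigcup_{i\in J}X_i\bigr)$ really is a subset of $\lambda_{\mathscr P}X$ rather than merely a candidate. The clean way is: $W:=\bigcup_{i\in J}X_i$ is open-closed in $X$, so by §1.1 its closure $\mathrm{cl}_{\beta X}W$ is open-closed in $\beta X$, whence $\mathrm{cl}_{\beta X}W=\mathrm{int}_{\beta X}\mathrm{cl}_{\beta X}W$; moreover $W$ is Lindel\"{o}f (countable union of the closed Lindel\"{o}f $X_i$'s, using that the Lindel\"{o}f property is preserved under countable closed sums), and since an open-closed subspace of a completely regular space need not be a cozero-set in general, I would instead invoke the zero-set form of Definition \ref{RRA} noted in the Remark after it (valid since ${\mathscr P}$, the Lindel\"{o}f property, is closed hereditary): $W$ being open-closed is in particular a zero-set precisely when... — here one should be slightly careful, so the safest route is to produce a genuine cozero-set: since $W$ is open-closed, its indicator function $\chi_W$ is continuous and bounded, $W=\mathrm{Coz}(\chi_W)$, and $\mathrm{cl}_XW=W$ is Lindel\"{o}f, so $W$ itself witnesses membership, giving $\mathrm{int}_{\beta X}\mathrm{cl}_{\beta X}W\subseteq\lambda_{\mathscr P}X$, i.e. $\mathrm{cl}_{\beta X}W\subseteq\lambda_{\mathscr P}X$. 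This closes the argument.
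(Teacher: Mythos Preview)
Your proposal is correct and follows essentially the same route as the paper's proof. The only cosmetic difference is in the $\subseteq$ direction: the paper simply observes that $\{X_i:i\in I\}$ is an open cover of the Lindel\"{o}f space $\mathrm{cl}_XC$, so a countable subfamily already covers it, whereas you reach the same countable $J$ via the cellularity argument (a pairwise disjoint family of nonempty open sets in a Lindel\"{o}f space is countable); both are valid, and the direct cover argument is slightly shorter. For the $\supseteq$ direction your final reasoning (using $W=\mathrm{Coz}(\chi_W)$ with $\mathrm{cl}_XW=W$ Lindel\"{o}f, and $\mathrm{cl}_{\beta X}W$ open-closed in $\beta X$) is exactly what the paper does, so the detour through zero-sets you considered is unnecessary.
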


\begin{proof}
Let
\[Y=\bigcup\Big\{\mathrm{cl}_{\beta X}\Big(\bigcup_{i\in J}X_i\Big):J\subseteq I\mbox{ is countable}\Big\}.\]

To show that $\lambda_{\mathscr P} X\subseteq Y$, let $C\in\mathrm{Coz}(X)$ has Lindel\"{o}f closure $\mathrm{cl}_X C$. Then
\[\mathrm{cl}_X C\subseteq\bigcup_{i\in J}X_i\]
for some countable $J\subseteq I$. Thus
\[\mathrm{cl}_{\beta X}C\subseteq\mathrm{cl}_{\beta X}\Big(\bigcup_{i\in J}X_i\Big).\]

We next show that $Y\subseteq\lambda_{\mathscr P} X$. Let $J\subseteq I$ be countable. Then
\[D=\bigcup_{i\in J}X_i\]
is a cozero-set of $X$, as it is open-closed in $X$, and it is Lindel\"{o}f. Since $D$ is open-closed in $X$, the closure $\mathrm{cl}_{\beta X}D$ in $\beta X$ is open-closed in $\beta X$. Therefore
\[\mathrm{cl}_{\beta X}D=\mathrm{int}_{\beta X}\mathrm{cl}_{\beta X}D\subseteq\lambda_{\mathscr P} X.\]
\end{proof}

Let ${\mathscr P}$ be the Lindel\"{o}f property. Let $D$ be an uncountable discrete space. Let $E$ be the subspace of $\beta D\backslash D$ consisting of elements in the closure (in $\beta D$) of countable subspaces of $D$. Then
\[E=\lambda_{\mathscr P} D\backslash D.\]
(Observe that cozero-sets in $D$ whose closure in $D$ has ${\mathscr P}$ are exactly countable subspace of $D$, and that each subspace of $D$, being open-closed in $D$, has open closure in $\beta D$.) In \cite{W}, the author proves the existence of a continuous (2-valued) function $f:E\rightarrow[0,1]$ which is not continuously extendible over $\beta D\backslash D$. This, in particular, proves that $\lambda_{\mathscr P} D$ is not normal. (To see this, suppose, in the contrary, that $\lambda_{\mathscr P} D$ is normal. Note that $E$ is closed in $\lambda_{\mathscr P} D$, as $D$, being locally compact, is open in $\beta D$. By the Tietze--Urysohn Extension Theorem, $f$ is extendible to a continuous bounded function over $\lambda_{\mathscr P} D$, and thus over $\beta(\lambda_{\mathscr P} D)$. Note that $\beta(\lambda_{\mathscr P} D)=\beta D$, as $D\subseteq\lambda_{\mathscr P} D$ by Lemma \ref{BBV}. But this is not possible.) This fact will be used in the following to show that in general $\lambda_{\mathscr P} X$ need not be normal. This, in particular, provides us with an example of a locally compact countably compact non-normal space $Y$ such that $C_c(Y)=C_0(Y)$.

Observe that if $X$ is a space and $D\subseteq X$, then
\[U\cap\mathrm{cl}_XD=\mathrm{cl}_X(U\cap D)\]
for every open-closed subspace $U$ of $X$. This simple observation will be used below.

\begin{lemma}\label{OPS}
Let ${\mathscr P}$ be the Lindel\"{o}f property. Let $X$ be a completely regular non-${\mathscr P}$-space representable as a disjoint union
\[X=\bigcup_{i\in I}X_i,\]
such that $X_i$'s are open-closed ${\mathscr P}$-subspaces of $X$. Then $\lambda_{\mathscr P} X$ in non-normal.
\end{lemma}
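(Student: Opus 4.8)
The plan is to exhibit, inside $\lambda_{\mathscr P} X$, a closed copy of $\lambda_{\mathscr P} D$ for a suitable uncountable discrete space $D$, and then to combine the non-normality of $\lambda_{\mathscr P} D$ recorded just before the lemma (which rests on the result of \cite{W}) with the fact that a closed subspace of a normal space is normal.

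First I would note that, after discarding the indices $i$ with $X_i=\emptyset$ (which changes neither $X$ nor the hypotheses), the index set $I$ is uncountable: a countable union of Lindel\"of subspaces is Lindel\"of, so a countable $I$ would force $X=\bigcup_{i\in I}X_i$ to have ${\mathscr P}$, contrary to assumption. Choose a point $x_i\in X_i$ for each $i\in I$ and set $D=\{x_i:i\in I\}$. Since each $X_i$ is open-closed in $X$ and meets $D$ only in $x_i$, the subspace $D$ of $X$ is discrete, of cardinality $|I|$. Moreover every bounded continuous scalar-valued function $g$ on $D$ extends to the continuous function on $X$ equal to $g(x_i)$ throughout $X_i$ (continuity holds since the $X_i$ are open); that is, $D$ is $C^*$-embedded in $X$, whence $\mathrm{cl}_{\beta X}D=\beta D$.

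Next I would identify $\lambda_{\mathscr P} D$, computed inside $\beta D=\mathrm{cl}_{\beta X}D$, with $\beta D\cap\lambda_{\mathscr P} X$. Writing $A_K=\bigcup_{i\in K}X_i$, Lemma \ref{PDS} applied to $X$ gives
\[\lambda_{\mathscr P} X=\bigcup\big\{\mathrm{cl}_{\beta X}A_K:K\subseteq I\mbox{ countable}\big\},\]
while Lemma \ref{PDS} applied to the discrete space $D=\bigcup_{i\in I}\{x_i\}$ gives $\lambda_{\mathscr P} D=\bigcup\{\mathrm{cl}_{\beta D}\{x_i:i\in K\}:K\subseteq I\mbox{ countable}\}$. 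For countable $K$ the set $A_K$ is open-closed in $X$, so $\mathrm{cl}_{\beta X}A_K$ is open-closed in $\beta X$ and $\mathrm{cl}_{\beta X}A_K\cap X=A_K$ (the closures in $\beta X$ of the complementary open-closed sets $A_K$ and $X\backslash A_K$ being disjoint). Applying the open-closed observation recalled before the lemma, in the space $\beta X$ with $U=\mathrm{cl}_{\beta X}A_K$, one obtains
\[\mathrm{cl}_{\beta X}A_K\cap\beta D=\mathrm{cl}_{\beta X}\big(\mathrm{cl}_{\beta X}A_K\cap D\big)=\mathrm{cl}_{\beta X}\{x_i:i\in K\}=\mathrm{cl}_{\beta D}\{x_i:i\in K\}.\]
Intersecting the displayed description of $\lambda_{\mathscr P} X$ with $\beta D$ and using this identity term by term yields $\beta D\cap\lambda_{\mathscr P} X=\lambda_{\mathscr P} D$.

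Finally, since $\beta D=\mathrm{cl}_{\beta X}D$ is closed in $\beta X$, the set $\lambda_{\mathscr P} D=\beta D\cap\lambda_{\mathscr P} X$ is a closed subspace of $\lambda_{\mathscr P} X$; as $D$ is uncountable and discrete, $\lambda_{\mathscr P} D$ is non-normal, and hence $\lambda_{\mathscr P} X$ is non-normal. The step I expect to require the most care is the chain of closure identities giving $\beta D\cap\lambda_{\mathscr P} X=\lambda_{\mathscr P} D$ — in particular the equality $\mathrm{cl}_{\beta X}A_K\cap X=A_K$ and the legitimacy of invoking the open-closed observation inside $\beta X$ — while the remaining ingredients are routine uses of Lemma \ref{PDS} and standard properties of $\beta X$.
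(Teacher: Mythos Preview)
Your proposal is correct and follows essentially the same route as the paper: pick one point $x_i\in X_i$, set $D=\{x_i:i\in I\}$, identify $\mathrm{cl}_{\beta X}D$ with $\beta D$, use Lemma \ref{PDS} together with the open-closed observation to show $\lambda_{\mathscr P}X\cap\beta D=\lambda_{\mathscr P}D$, and conclude non-normality from the result of \cite{W}. The only cosmetic difference is that the paper justifies $\mathrm{cl}_{\beta X}D=\beta D$ by noting that $D$ is closed in the normal space $X$ (normality being an implicit consequence of $X$ being a clopen disjoint union of regular Lindel\"of, hence normal, pieces), whereas you verify $C^*$-embedding of $D$ directly via the locally constant extension; your argument is thus slightly more self-contained on this point.
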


\begin{proof}
Let $x_i\in X_i$ for each $i\in I$. Then
\[D=\{x_i:i\in I\}\]
is a closed discrete subspace of $X$, and since $X$ is non-${\mathscr P}$, is uncountable. Suppose in the contrary that $\lambda_{\mathscr P} X$ is normal. Then  \[\lambda_{\mathscr P} X\cap\mathrm{cl}_{\beta X}D\]
is normal, as it is closed in $\lambda_{\mathscr P} X$. By Lemma \ref{PDS} we have
\[\lambda_{\mathscr P} X\cap\mathrm{cl}_{\beta X}D=\bigcup\Big\{\mathrm{cl}_{\beta X}\Big(\bigcup_{i\in J}X_i\Big)\cap\mathrm{cl}_{\beta X}D:J\subseteq I\mbox{ is countable}\Big\}.\]
Let $J\subseteq I$ be countable. Since
\[\mathrm{cl}_{\beta X}\Big(\bigcup_{i\in J}X_i\Big)\]
is open-closed in $\beta X$ (as $\bigcup_{i\in J}X_i$ is open-closed in $X$) we have
\begin{eqnarray*}
\mathrm{cl}_{\beta X}\Big(\bigcup_{i\in J}X_i\Big)\cap\mathrm{cl}_{\beta X}D&=&\mathrm{cl}_{\beta X}\Big(\mathrm{cl}_{\beta X}\Big(\bigcup_{i\in J}X_i\Big)\cap D\Big)\\&=&\mathrm{cl}_{\beta X}\Big(\bigcup_{i\in J}X_i\cap D\Big)=\mathrm{cl}_{\beta X}\big(\{x_i:i\in J\}\big).
\end{eqnarray*}
But $\mathrm{cl}_{\beta X}D=\beta D$, as $D$ is closed in (the normal space) $X$. Therefore
\[\mathrm{cl}_{\beta X}\big(\{x_i:i\in J\}\big)=\mathrm{cl}_{\beta X}\big(\{x_i:i\in J\}\big)\cap\mathrm{cl}_{\beta X}D=\mathrm{cl}_{\beta D}\big(\{x_i:i\in J\}\big).\]
Thus
\[\lambda_{\mathscr P} X\cap\mathrm{cl}_{\beta X}D=\lambda_{\mathscr P} D,\]
contradicting the fact that $\lambda_{\mathscr P} D$ in not normal.
\end{proof}

The following corollary of Theorem \ref{YUS}, together with Theorem \ref{RTF}, constitute the main result of this section.

\begin{theorem}\label{RDF}
Let ${\mathscr P}$ be the Lindel\"{o}f property. Let $X$ be a paracompact locally-${\mathscr P}$ space. Then $C_{\mathscr P}(X)$ is a Banach subalgebra of $C_b(X)$ isometrically isomorphic to $C_c(Y)$ for some unique (up to homeomorphism) locally compact space $Y$, namely $Y=\lambda_{\mathscr P} X$. Moreover
\begin{itemize}
\item[\rm(1)] $C_{\mathscr P}(X)=\{f\in C_b(X):\mathrm{supp}(f)\mbox{ has }{\mathscr P}\}$.
\item[\rm(2)] $C_0(X)\subseteq C_{\mathscr P}(X)$, with proper inclusion if $X$ is non-${\mathscr P}$.
\item[\rm(3)] $C_c(Y)=C_0(Y)$.
\item[\rm(4)] $Y$ is countably compact.
\item[\rm(5)]  $Y$ is neither Lindel\"{o}f nor paracompact, if $X$ is non-${\mathscr P}$.
\end{itemize}
If $X$ is moreover locally compact then in addition we have
\begin{itemize}
\item[\rm(6)] $C_{\mathscr P}(X)=\{f\in C_b(X):\mathrm{supp}(f)\mbox{ is $\sigma$-compact}\}$.
\item[\rm(7)] $Y$ is non-normal, if $X$ is non-${\mathscr P}$.
\end{itemize}
\end{theorem}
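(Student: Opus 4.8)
The plan is to extract the bulk of the statement — the Banach-algebra isomorphism onto $C_c(Y)$ with $Y=\lambda_{\mathscr P}X$, and items (1), (3), (4), (5) — from Theorem~\ref{YUS}, and to prove (2), (6), (7) by hand with the help of one structural fact: every paracompact locally-${\mathscr P}$ space $X$ is a disjoint union $X=\bigcup_{i\in I}X_i$ of non-empty open-closed Lindel\"of subspaces. (This is obtained as in Theorem 5.1.27 and Problem 4.4.F of \cite{E}: fix a locally finite open refinement $\mathscr V$ of a cover of $X$ by open sets with Lindel\"of closure; each $V\in\mathscr V$ has Lindel\"of closure, whence, by local finiteness of $\mathscr V$, $V$ meets only countably many members of $\mathscr V$; so the family of members of $\mathscr V$ joined to a fixed one by a finite chain of pairwise-intersecting members is countable, and the unions of these chain-classes give the $X_i$.)

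First I would verify that Theorem~\ref{YUS} applies. Since $X$ is paracompact it is normal; it is locally-${\mathscr P}$ by hypothesis; and the Lindel\"of property is closed hereditary and preserved under countable closed sums. The only substantive hypothesis — that the closure of every ${\mathscr P}$-subspace of $X$ has a ${\mathscr P}$-neighborhood — I would establish in two moves. (i) The closure of a Lindel\"of subspace $S$ of $X$ is Lindel\"of: given an open cover $\mathscr G$ of $\mathrm{cl}_X S$, refine the cover $\mathscr G\cup\{X\backslash\mathrm{cl}_X S\}$ of $X$ by a locally finite $\mathscr V$; the members of $\mathscr V$ meeting $S$ are countably many (local finiteness together with $S$ Lindel\"of), they cover $\mathrm{cl}_X S$ (an open set meeting $\mathrm{cl}_X S$ meets $S$), and each of them lies in a member of $\mathscr G$, producing a countable subcover of $\mathscr G$. (ii) Running the argument in the second half of the proof of Proposition~\ref{GGS} on the closed Lindel\"of set $\mathrm{cl}_X S$ yields a Lindel\"of neighborhood of it. Theorem~\ref{YUS} now delivers the main assertion and items (1), (3), (4), (5) verbatim — the Lindel\"of property and paracompactness being exactly the two properties appearing in the last bullet of Theorem~\ref{YUS}.

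For (2): if $f\in C_0(X)$ then $\mathrm{Coz}(f)=\bigcup_{n\geq1}|f|^{-1}\big[[1/n,\infty)\big]$ is $\sigma$-compact, hence Lindel\"of, so by (i) and (ii) its closure $\mathrm{supp}(f)$ has a ${\mathscr P}$-neighborhood and $f\in C_{\mathscr P}(X)$; hence $C_0(X)\subseteq C_{\mathscr P}(X)$. If moreover $X$ is non-${\mathscr P}$, i.e.\ non-Lindel\"of, then $I$ is uncountable; choosing a countably infinite $J\subseteq I$ and setting $D=\bigcup_{i\in J}X_i$, the characteristic function $\chi_D$ is continuous (as $D$ is open-closed), belongs to $C_{\mathscr P}(X)$ by (1) (since $\mathrm{supp}(\chi_D)=D$ is Lindel\"of), but is not in $C_0(X)$ because $|\chi_D|^{-1}\big[[1/2,\infty)\big]=D$, being an infinite disjoint union of non-empty open sets, is not compact; so the inclusion is proper.

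Finally assume in addition that $X$ is locally compact. For (6): a $\sigma$-compact support is Lindel\"of, so that inclusion is immediate from (1); conversely, if $f\in C_{\mathscr P}(X)$ then $\mathrm{supp}(f)$ is closed in $X$, hence locally compact, and a locally compact Lindel\"of space is $\sigma$-compact. For (7): $X$ is now locally compact and paracompact, hence (Theorem 5.1.27 of \cite{E}) a disjoint union of open-closed Lindel\"of subspaces; since $X$ is completely regular and non-${\mathscr P}$, Lemma~\ref{OPS} gives that $Y=\lambda_{\mathscr P}X$ is non-normal. I expect the one genuinely delicate point to be the structural fact of the first paragraph — in particular the countability of the chain-classes — together with part (i) of the hypothesis check; all else reduces to a direct appeal to Theorem~\ref{YUS} or Lemma~\ref{OPS}, or to a one-line observation.
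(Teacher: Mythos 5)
Your proposal is correct and follows the same overall route as the paper: verify the hypotheses of Theorem~\ref{YUS} (the only substantive one being that the closure of a Lindel\"of subspace of $X$ has a Lindel\"of neighborhood), read off the main assertion and (1), (3), (4), (5), and handle (2), (6), (7) by hand, with (7) coming from Lemma~\ref{OPS}. The differences are minor but worth recording. For your step (i) the paper simply cites Engelking 5.1.28 (paracompactness is closed hereditary) and 5.1.25 (a paracompact space with a dense Lindel\"of subspace is Lindel\"of), whereas you reprove this by a direct locally-finite-refinement argument; both are fine, yours is self-contained. The more interesting divergence is in the properness part of (2): the paper invokes ``the representation of $X$ given in Part 1.3,'' which is stated only for locally compact paracompact spaces, even though (2) sits in the list of conclusions that do not assume local compactness; you instead establish the clopen Lindel\"of decomposition for an arbitrary paracompact locally-Lindel\"of space via the chain-class argument, which is exactly what is needed to make (2) airtight in full generality (and your witness --- the characteristic function of a countably infinite union of the pieces, automatically non-compact as an infinite disjoint union of non-empty clopen sets --- is if anything cleaner than the paper's single regrouped non-compact piece). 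One small point you elide in the chain-class argument: to see that each chain-class $X_i$ is Lindel\"of you should note that, the family being locally finite, $X_i=\mathrm{cl}_X\bigl(\bigcup_n V_n\bigr)=\bigcup_n\mathrm{cl}_XV_n$ is a countable union of closed Lindel\"of subspaces; this is routine but should be said. Everything else (the application of Lemma~\ref{OPS}, which requires normality of $X$ --- available since $X$ is paracompact --- and the locally compact Lindel\"of $\Leftrightarrow$ $\sigma$-compact step in (6)) checks out.
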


\begin{proof}
Conditions (1), (3), (4) and (5) follow from Theorem \ref{YUS}; we only need to show that the closure in $X$ of each Lindel\"{o}f subspace of $X$ has a Lindel\"{o}f neighborhood in $X$. (Note that the Lindel\"{o}f property is closed hereditary and is preserved under countable closed sums.)

Let $A$ be a Lindel\"{o}f subspace of $X$. Since paracompactness is closed hereditary (see Theorem 5.1.28 of \cite{E}), $\mathrm{cl}_X A$, being closed in $X$, is paracompact. Since any paracompact space having a dense Lindel\"{o}f subspace is itself Lindel\"{o}f (see Theorem 5.1.25 of \cite{E}), $\mathrm{cl}_X A$ is Lindel\"{o}f. For each $x\in\mathrm{cl}_X A$, let $U_x$ be an open neighborhood of $x$ in $X$ with Lindel\"{o}f closure $\mathrm{cl}_X U_x$. Then
\[\mathrm{cl}_X A\subseteq U_{x_1}\cup U_{x_2}\cup\cdots=U\]
for some $x_1,x_2,\ldots\in\mathrm{cl}_X A$. Since $X$ is normal (as it is paracompact) there exists an open neighborhood $V$ of $\mathrm{cl}_X A$ in $X$ such that $\mathrm{cl}_X V\subseteq U$. Observe that $\mathrm{cl}_X V$ is Lindel\"{o}f, as it is a closed subspace of the Lindel\"{o}f space
\[\mathrm{cl}_X U_{x_1}\cup\mathrm{cl}_X U_{x_2}\cup\cdots.\]

To show (2), let $f\in C_0(X)$. Then $|f|^{-1}([1/n,\infty))$ is compact for each $n=1,2,\ldots$ and therefore
\[\mathrm{Coz}(f)=\bigcup_{n=1}^\infty |f|^{-1}\big([1/n,\infty)\big)\]
is $\sigma$-compact and thus Lindel\"{o}f. Note that any paracompact space with a dense Lindel\"{o}f subspace is Lindel\"{o}f. (See Theorem 5.1.25 of \cite{E}.) Since paracompactness is closed hereditary (see Theorem 5.1.28 of \cite{E}) $\mathrm{supp}(f)$ is paracompact, as it is closed in $X$, and thus it is Lindel\"{o}f, as it contains $\mathrm{Coz}(f)$ as a dense subspace. Therefore $f\in C_{\mathscr P}(X)$. Now suppose that $X$ is non-Lindel\"{o}f. Assume the representation of $X$ given in Part 1.3. We may further assume that $X_i$'s are non-compact. (Otherwise, group together any countable number of $X_i$'s.) Then, for the function $f$ which is defined to be identical to $1$ on $X_i$ and vanishing elsewhere, we have $f\in C_{\mathscr P}(X)$, while trivially $f\notin C_0(X)$.

In the remainder of the proof assume that $X$ is moreover locally compact.

Note that (7) follows from Lemma \ref{OPS} (using the representation of $X$ given in Part 1.3).

To show (6), let $f\in C_b(X)$. If $f\in C_{\mathscr P}(X)$, then since $X$ is normal, $\mathrm{supp}(f)$ has a closed Lindel\"{o}f neighborhood in $X$. But then $\mathrm{supp}(f)$ has a closed $\sigma$-compact neighborhood in $X$, as any closed neighborhood of $\mathrm{supp}(f)$ in $X$ (being closed in the locally compact space $X$) is locally compact, and in the realm of locally compact spaces, the two notions of $\sigma$-compactness and being Lindel\"{o}f coincide. (See Problem 3.8.C of \cite{E}.) Thus $\mathrm{supp}(f)$ is $\sigma$-compact. For the converse, note that if $\mathrm{supp}(f)$ is $\sigma$-compact, than it is Lindel\"{o}f, and therefore $f\in C_{\mathscr P}(X)$ by (1).
\end{proof}

Let $I$ be an infinite set. A theorem of Tarski guarantees the existence of a collection $\mathscr{I}$ of cardinality $|I|^{\aleph_0}$ consisting of countable infinite subsets of $I$, such that the intersection of any two distinct elements of $\mathscr{I}$ is finite (see Theorem 2.1 of \cite{Ho}); this will be used in the following.

Note that the collection of all subsets of cardinality at most $\mathfrak{m}$ in a set of cardinality $\mathfrak{n}\geq\mathfrak{m}$ has cardinality at most $\mathfrak{n}^\mathfrak{m}$.

\begin{lemma}\label{RTS}
Let ${\mathscr P}$ be the Lindel\"{o}f property. Let $X$ be a locally-${\mathscr P}$ non-${\mathscr P}$ metrizable space. Then
\[\dim C_{\mathscr P}(X)=\ell(X)^{\aleph_0}.\]
\end{lemma}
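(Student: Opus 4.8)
The strategy is to compute $\dim C_{\mathscr P}(X)$ through the isometric isomorphism $C_{\mathscr P}(X)\cong C_c(\lambda_{\mathscr P} X)$ supplied by Theorem~\ref{UUS} (the hypotheses are met since, by Part~1.4, a locally-${\mathscr P}$ metrizable space is locally compact? --- no, only paracompact; but the Lindel\"of property is closed hereditary and preserved under finite closed sums, and metrizable spaces are paracompact hence normal, so Theorem~\ref{UUS} applies). It therefore suffices to show $\dim C_c(\lambda_{\mathscr P} X)=\ell(X)^{\aleph_0}$. By Alexandroff's theorem (Part~1.4), write $X=\bigcup_{i\in I}X_i$ as a disjoint union of non-empty Lindel\"of open-closed subspaces; since $X$ is non-${\mathscr P}$, the index set $I$ is uncountable and $\ell(X)=|I|$, so the target dimension is $|I|^{\aleph_0}$. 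Lemma~\ref{PDS} then gives the explicit description
\[
\lambda_{\mathscr P} X=\bigcup\Big\{\mathrm{cl}_{\beta X}\Big(\bigcup_{i\in J}X_i\Big):J\subseteq I\text{ countable}\Big\}.
\]

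For the \emph{lower bound} $\dim C_c(\lambda_{\mathscr P} X)\ge|I|^{\aleph_0}$, I would invoke Tarski's theorem (cited just before the lemma) to obtain a family $\mathscr{I}$ of size $|I|^{\aleph_0}$ consisting of countably infinite subsets of $I$ with pairwise finite intersections. For each $J\in\mathscr{I}$ pick a point $x_i\in X_i$ for $i\in J$; since $\{x_i:i\in J\}$ is a countable closed discrete subspace of $X$, its closure $\mathrm{cl}_{\beta X}\{x_i:i\in J\}=\beta\{x_i:i\in J\}$ contains a non-isolated point $p_J$, which lies in $\mathrm{cl}_{\beta X}(\bigcup_{i\in J}X_i)\subseteq\lambda_{\mathscr P} X$. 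The pairwise-finite-intersection property should guarantee that the points $p_J$, $J\in\mathscr{I}$, are pairwise distinct (two distinct $J,J'$ meet in a finite set, and a point in the closure of a countable discrete set determines, via the ultrafilter it induces, a subset on which it is ``supported'', forcing $p_J\ne p_{J'}$); then for each $J$ choose $f_J\in C_c(\lambda_{\mathscr P} X)$ with $f_J(p_J)=1$ and $\mathrm{supp}(f_J)\subseteq\mathrm{cl}_{\beta X}(\bigcup_{i\in J}X_i)$, and argue these are linearly independent. Alternatively, and perhaps more cleanly, note $C_c(\lambda_{\mathscr P} X)$ contains the characteristic functions of the open-closed sets $\mathrm{cl}_{\beta X}X_i$, $i\in I$, plus enough functions separating the $p_J$'s; the point is only that the \emph{cardinality} of a Hamel basis is at least $|I|^{\aleph_0}$, so any explicit family of that size witnessed to be linearly independent suffices.

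For the \emph{upper bound} $\dim C_c(\lambda_{\mathscr P} X)\le|I|^{\aleph_0}$, I would show $|\lambda_{\mathscr P} X|\le|I|^{\aleph_0}\cdot\mathfrak{c}$-type estimates are too crude; instead bound the number of continuous functions. Each $f\in C_c(\lambda_{\mathscr P} X)$ has $\mathrm{supp}(f)$ compact, hence covered by finitely many $\mathrm{int}_{\beta X}\mathrm{cl}_{\beta X}C$ with $\mathrm{cl}_X C$ Lindel\"of; tracing through Lemma~\ref{PDS}, $\mathrm{supp}(f)\subseteq\mathrm{cl}_{\beta X}(\bigcup_{i\in J}X_i)$ for some countable $J\subseteq I$. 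Restricting $f$ to $X$ gives $f|X\in C_{\mathscr P}(X)$ vanishing off the Lindel\"of (hence second-countable, being metrizable) open-closed subspace $\bigcup_{i\in J}X_i$; there are at most $\mathfrak{c}$ continuous real-valued functions on a separable metrizable space, and at most $|I|^{\aleph_0}$ choices of $J$, giving $|C_{\mathscr P}(X)|\le|I|^{\aleph_0}\cdot\mathfrak{c}=|I|^{\aleph_0}$ (as $|I|\ge\aleph_1$, so $|I|^{\aleph_0}\ge\mathfrak{c}$). Since a vector space of cardinality $\kappa$ over a field of cardinality $\le\kappa$ has dimension $\le\kappa$, the upper bound follows.

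The main obstacle is the lower bound: one must produce genuinely $|I|^{\aleph_0}$ \emph{linearly independent} elements, not merely that many elements, and the natural candidates live on overlapping pieces of $\lambda_{\mathscr P} X$. The Tarski almost-disjoint family is the right device, but the argument that distinct $J$'s yield distinct points $p_J$ (or, more robustly, that the associated functions are independent) requires care with how points of $\beta D\setminus D$ for a countable discrete $D$ ``see'' subsets of $D$ --- essentially a free-ultrafilter separation argument. I expect this to be where the real work lies; the upper bound and the reduction via Theorem~\ref{UUS} are routine.
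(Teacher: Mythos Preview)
Your upper bound is essentially the paper's argument: restrict each $f\in C_{\mathscr P}(X)$ to some $H_J=\bigcup_{i\in J}X_i$ with $J\subseteq I$ countable, use that $H_J$ is separable metrizable so $|C_b(H_J)|\le\mathfrak c$, and count the choices of $J$. Fine.

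For the lower bound, however, you take an unnecessary detour through $\lambda_{\mathscr P}X$ and points of $\beta X\setminus X$. The paper never leaves $C_{\mathscr P}(X)$: it simply sets $f_J=\chi_{H_J}$ for each $J\in\mathscr{I}$ (the Tarski family). Since $H_J$ is open-closed and Lindel\"of in $X$, each $f_J$ lies in $C_{\mathscr P}(X)$ automatically. Linear independence is then a one-line computation in $X$: given distinct $J_1,\dots,J_n\in\mathscr{I}$ and scalars with $\sum_k c_kf_{J_k}=0$, the almost-disjointness gives (for each fixed $m$) an index $i\in J_m\setminus\bigcup_{k\ne m}J_k$, and evaluating at any point of $X_i$ yields $c_m=0$. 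No ultrafilters, no points $p_J$, no separation in $\beta X$.

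Your approach can be salvaged (indeed, once you notice that $f_J=\chi_{\mathrm{cl}_{\beta X}H_J}$ works as your function in $C_c(\lambda_{\mathscr P}X)$, you are back to the paper's functions under the isomorphism $\psi$, and linear independence can be checked by evaluating at points of $X\subseteq\lambda_{\mathscr P}X$ rather than at the $p_J$). But the route via accumulation points $p_J$ and ``ultrafilter separation'' is where you yourself flag the difficulty, and it is precisely the part that the direct argument renders trivial. The invocation of Theorem~\ref{UUS} at the outset buys you nothing here; the lemma is a pure counting statement about $C_{\mathscr P}(X)$ and is best proved there.
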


\begin{proof}
Assume the representation of $X$ given in Part 1.4. Note that $I$ is infinite, as $X$ is non-Lindel\"{o}f, and $\ell(X)=|I|$.

Let $\mathscr{I}$ be a collection of cardinality $|I|^{\aleph_0}$ consisting of countable infinite subsets of $I$, such that the intersection of any two distinct elements of $\mathscr{I}$ is finite. To simplify the notation denote
\[H_J=\bigcup_{i\in J}X_i\]
for each $J\subseteq I$. Define
\[f_J=\chi_{H_J}\]
for any $J\in\mathscr{I}$. Then, no element of
\[\mathscr{F}=\{f_J:J\in\mathscr{I}\}\]
is a linear combination of other elements (since each element of $\mathscr{I}$ is infinite and each pair of distinct elements of $\mathscr{I}$ has finite intersection). Observe that $\mathscr{F}$ is of cardinality $|\mathscr{I}|$. This shows that
\[\dim C_{\mathscr P}(X)\geq|\mathscr{I}|=|I|^{\aleph_0}=\ell(X)^{\aleph_0}.\]

Note that if $f\in C_{\mathscr P}(X)$, then since $\mathrm{supp}(f)$ is Lindel\"{o}f, we have
\[\mathrm{supp}(f)\subseteq H_J\]
where $J\subseteq I$ is countable, therefore, one may assume that $f\in C_b(H_J)$. Conversely, if $J\subseteq I$ is countable, then each element of $C_b(H_J)$ can be extended trivially to an element of $C_{\mathscr P}(X)$ (by defining it to be identically $0$ elsewhere). Thus $C_{\mathscr P}(X)$ may be viewed as the union of all $C_b(H_J)$, where $J$ runs over all countable subsets of $I$. Note that if $J\subseteq I$ is countable, then $H_J$ is separable (note that in metrizable spaces separability coincides with being Lindel\"{o}f); thus any element of $C_b(H_J)$ is determined by its value on a countable set. This implies that for each countable $J\subseteq I$, the set $C_b(H_J)$ is of cardinality at most $\mathfrak{c}^{\aleph_0}=2^{\aleph_0}$. Observe that there exist at most $|I|^{\aleph_0}$ countable $J\subseteq I$. Now
\begin{eqnarray*}
\dim C_{\mathscr P}(X)\leq\big|C_{\mathscr P}(X)\big|&\leq&\Big|\bigcup\big\{C_b(H_J):J\subseteq I\mbox{ is countable}\big\}\Big|\\&\leq& 2^{\aleph_0}\cdot|I|^{\aleph_0}=|I|^{\aleph_0}=\ell(X)^{\aleph_0},
\end{eqnarray*}
which together with the first part proves the lemma.
\end{proof}

The following is a counterpart of Theorem \ref{RDF}.

\begin{theorem}\label{RTF}
Let ${\mathscr P}$ be the Lindel\"{o}f property. Let $X$ be a metrizable locally-${\mathscr P}$ space. Then $C_{\mathscr P}(X)$ is a Banach subalgebra of $C_b(X)$ isometrically isomorphic to $C_c(Y)$ for some unique (up to homeomorphism) locally compact space $Y$, namely $Y=\lambda_{\mathscr P} X$. Moreover
\begin{itemize}
\item[\rm(1)] $C_{\mathscr P}(X)=\{f\in C_b(X):\mathrm{supp}(f)\mbox{ has }{\mathscr P}\}$.
\item[\rm(2)] $C_0(X)\subseteq C_{\mathscr P}(X)$, with proper inclusion if $X$ is non-${\mathscr P}$.
\item[\rm(3)] $C_c(Y)=C_0(Y)$.
\item[\rm(4)] $Y$ is countably compact.
\item[\rm(5)] $Y$ is non-normal, if $X$ is non-${\mathscr P}$.
\item[\rm(6)] $\dim C_{\mathscr P}(X)=\ell(X)^{\aleph_0}$.
\end{itemize}
\end{theorem}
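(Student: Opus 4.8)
The plan is to deduce everything from Theorem \ref{YUS}, Proposition \ref{GGS}, Lemma \ref{OPS} and Lemma \ref{RTS}, in essentially the same way that the paracompact case (Theorem \ref{RDF}) was handled, the only structural input peculiar to this setting being that a metrizable space with a dense Lindel\"of subspace is Lindel\"of. First I would apply Theorem \ref{YUS} with ${\mathscr P}$ the Lindel\"of property. Its hypotheses hold: $X$ is normal (being metrizable), it is locally-${\mathscr P}$ by assumption, and the Lindel\"of property is closed hereditary and preserved under countable closed sums. The one point needing an argument is that the closure in $X$ of each Lindel\"of subspace $A$ of $X$ has a Lindel\"of neighborhood. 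Here $\mathrm{cl}_X A$ is metrizable (as a subspace of a metrizable space) and has the dense Lindel\"of --- i.e. dense separable --- subspace $A$, hence is separable and so Lindel\"of. Covering $\mathrm{cl}_X A$ by open sets whose closures are Lindel\"of (these exist since $X$ is locally-${\mathscr P}$) and extracting a countable subcover $U_{x_1},U_{x_2},\ldots$, then using normality to interpose an open $V$ with $\mathrm{cl}_X A\subseteq V\subseteq\mathrm{cl}_X V\subseteq\bigcup_n U_{x_n}$, the set $\mathrm{cl}_X V$ is closed in the Lindel\"of space $\bigcup_n\mathrm{cl}_X U_{x_n}$, hence is a Lindel\"of neighborhood of $\mathrm{cl}_X A$. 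Theorem \ref{YUS} then yields the opening assertion together with (1), (3) and (4) (and that $Y$ is neither Lindel\"of nor paracompact when $X$ is non-${\mathscr P}$); alternatively, (1) is immediate from Proposition \ref{GGS}.

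For (2), if $f\in C_0(X)$ then $\mathrm{Coz}(f)=\bigcup_{n\geq1}|f|^{-1}\big([1/n,\infty)\big)$ is $\sigma$-compact, hence Lindel\"of, so $\mathrm{supp}(f)$ is metrizable with a dense Lindel\"of subspace, hence Lindel\"of; by (1) this gives $f\in C_{\mathscr P}(X)$. If $X$ is non-${\mathscr P}$, write $X=\bigcup_{i\in I}X_i$ as in Part~1.4; then $I$ is infinite (as $X$ is non-Lindel\"of), and after grouping the $X_i$ together we may assume each of them is non-compact. The characteristic function $\chi_{X_{i_0}}$ of one such piece has Lindel\"of support $X_{i_0}$, so lies in $C_{\mathscr P}(X)$, but it is not in $C_0(X)$ since $X_{i_0}$ is non-compact; hence the inclusion is proper.

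Items (5) and (6) I would obtain by direct citation. For (5): $X$ is completely regular and, when non-${\mathscr P}$, the decomposition of Part~1.4 exhibits $X$ as a disjoint union of open-closed Lindel\"of subspaces, so Lemma \ref{OPS} applies and $Y=\lambda_{\mathscr P}X$ is non-normal. For (6): if $X$ is non-${\mathscr P}$ this is precisely Lemma \ref{RTS}; if $X$ is ${\mathscr P}$, then every closed subspace of the Lindel\"of metrizable space $X$ is Lindel\"of, so by (1) one has $C_{\mathscr P}(X)=C_b(X)$, an infinite-dimensional Banach space of cardinality at most $\mathfrak{c}^{\aleph_0}=2^{\aleph_0}$ and (being an infinite-dimensional Banach space) of linear dimension at least $2^{\aleph_0}$, which equals $\ell(X)^{\aleph_0}=\aleph_0^{\aleph_0}$.

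The genuinely new content is confined to the verification of the hypothesis of Theorem \ref{YUS} in the first paragraph; everything else is a near-verbatim transcription of the proof of Theorem \ref{RDF} together with the already-established Lemmas \ref{OPS} and \ref{RTS}. Accordingly the only mild obstacle is making sure the ``dense Lindel\"of implies Lindel\"of'' step is applied precisely to the relevant metrizable closures; I do not anticipate any real difficulty.
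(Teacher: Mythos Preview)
Your argument is correct, but you work harder than the paper does. The paper's proof is essentially one line: since every metrizable space is paracompact, Theorem~\ref{RDF} applies verbatim and delivers the opening assertion together with (1)--(5) (note that (5) of Theorem~\ref{RDF} gives non-paracompactness, while non-normality comes from Lemma~\ref{OPS} via the decomposition of Part~1.4); item (6) is then Lemma~\ref{RTS}. You instead re-verify the hypothesis of Theorem~\ref{YUS} directly in the metrizable setting, using ``metrizable with dense Lindel\"of subspace $\Rightarrow$ Lindel\"of'' in place of the paper's ``paracompact with dense Lindel\"of subspace $\Rightarrow$ Lindel\"of''. This is fine, but it amounts to reproving the relevant portion of Theorem~\ref{RDF} rather than citing it; the shortcut ``metrizable $\Rightarrow$ paracompact'' makes your separate verification unnecessary.

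One minor remark on your added case for (6) when $X$ is Lindel\"of (which the paper does not supply, simply citing Lemma~\ref{RTS}): your claim that $C_b(X)$ is an infinite-dimensional Banach space tacitly assumes $X$ is infinite, so strictly speaking the formula $\dim C_{\mathscr P}(X)=\ell(X)^{\aleph_0}$ fails for finite $X$. This is a harmless edge case in context, but worth noting since you explicitly tried to cover it.
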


\begin{proof}
The theorem follows from Lemmas \ref{OPS} and \ref{RTS} and Theorem \ref{RDF}. Observe that metrizable spaces are paracompact.
\end{proof}

\begin{remark}
Note that in metrizable spaces the notions of second countability and separability coincide with being Lindel\"{o}f. Thus, in Theorem \ref{RTF} we have
\begin{eqnarray*}
C_{\mathscr P}(X)&=&\big\{f\in C_b(X):\mathrm{supp}(f)\mbox{ is separable}\big\}\\&=&\big\{f\in C_b(X):\mathrm{supp}(f)\mbox{ is second countable}\big\}.
\end{eqnarray*}
\end{remark}

\begin{remark}\label{HHJ}
Theorems \ref{RDF} and \ref{RTF} highlight how differently $C_{\mathscr P}(X)$ may behave by varying the topological property ${\mathscr P}$ for a locally-${\mathscr P}$ space $X$. If ${\mathscr P}$ is compactness then of course $C_{\mathscr P}(X)=C_c(X)$. Thus, in this case, if $C_{\mathscr P}(X)$ is a Banach algebra, then it is closed in $C_0(X)$, and since $C_c(X)$ is dense in $C_0(X)$, it follows that $C_0(X)=C_c(X)$. It is known that for any locally compact space $Y$ we have $C_0(Y)=C_c(Y)$ if and only if every $\sigma$-compact subspace of $Y$ is contained in a compact subspace of $Y$. (See Problem 7G.2 of \cite{GJ}.) Thus, in particular, if $Y$ is a locally compact space, then $C_0(Y)=C_c(Y)$ implies that $Y$ is countably compact. From this it follows that $X$ is countably compact, and thus compact, if $X$ is metrizable. In other words, if ${\mathscr P}$ is compactness and $X$ is a locally-${\mathscr P}$ metrizable space, then $C_{\mathscr P}(X)$ being a Banach algebra, implies that $X$ has ${\mathscr P}$. However, if we let ${\mathscr P}$ to be the Lindel\"{o}f property, then for any locally-${\mathscr P}$ metrizable (or even paracompact) space $X$, it follows that $C_{\mathscr P}(X)$ is a Banach algebra, without $X$ necessarily having ${\mathscr P}$.
\end{remark}

\begin{remark}\label{KHJ}
A regular space $X$ is {\em linearly Lindel\"{o}f} if every linearly ordered (by $\subseteq$) open cover of $X$ has a countable subcover; equivalently, if every uncountable subspace of $X$ has a complete accumulation point in $X$; see\cite{G}. (A point $x\in X$ is a {\em complete accumulation point} of a subspace $A$ of $X$, if
\[|U\cap A|=|A|\]
for every neighborhood $U$ of $x$ in $X$.) Obviously, if $X$ is Lindel\"{o}f, then it is linearly Lindel\"{o}f. The converse holds if $X$ is either locally compact paracompact or locally Lindel\"{o}f metrizable. To show this, note that
\[X=\bigcup_{i\in I}X_i,\]
where $X_i$'s are pairwise disjoint non-empty Lindel\"{o}f open-closed subspaces of $X$. (See Parts 1.3 and 1.4.) Now, if $X$ is non-Lindel\"{o}f then $I$ is uncountable, and thus there exists an infinite subspace $A$ of $X$ (choose some $x_i\in X_i$ for each $i\in I$ and let $A=\{x_i:i\in I\}$) without even an accumulation point. That is, $X$ is not linearly Lindel\"{o}f. Since paracompactness, local compactness and being locally Lindel\"{o}f are all closed hereditary, in Theorem \ref{RDF}(4) and Theorem \ref{RTF} (using condition (1) of Theorems \ref{RDF} and \ref{RTF}, respectively) we further have
\[C_{\mathscr P}(X)=\big\{f\in C_b(X):\mathrm{supp}(f)\mbox{ is linearly Lindel\"{o}f}\big\}.\]
\end{remark}

\begin{remark}\label{GHJ}
The {\em density} of a space $X$, denoted by $d(X)$, is defined by
\[d(X)=\min\big\{|D|:D\mbox{ is dense in }X\big\}+\aleph_0.\]
In particular, a space $X$ is separable if and only if $d(X)=\aleph_0$. Note that, if $X$ is a locally Lindel\"{o}f metrizable space, then $d(X)=\ell(X)$. (To see this, assume the representation of $X$ given in Part 1.4 and observe that $d(X)=|I|=\ell(X)$ if $I$ is infinite and $d(X)=\aleph_0=\ell(X)$ otherwise.) Thus in Theorem \ref{RTF}(5) we may replace $\ell(X)$ by $d(X)$.
\end{remark}

In our next result in this section we fit certain type of ideals between $C_0(X)$ and $C_b(X)$.

Let $\mu$ be an infinite cardinal. A regular space $X$ is called {\em $\mu$-Lindel\"{o}f} if every open cover of $X$ has a subcover of cardinality $\leq\mu$. Note that the $\mu$-Lindel\"{o}f property turns weaker as $\mu$ increases. Since the $\aleph_0$-Lindel\"{o}f property coincides with the Lindel\"{o}f property it then follows that every Lindel\"{o}f space is $\mu$-Lindel\"{o}f.

\begin{theorem}\label{GGFD}
Let $X$ be a non-Lindel\"{o}f space which is either locally compact paracompact or locally Lindel\"{o}f metrizable. Then there exists a chain
\[C_0(X)\subsetneqq H_0\subsetneqq H_1\subsetneqq\cdots\subsetneqq H_\lambda=C_b(X)\]
of Banach subalgebras of $C_b(X)$ such that $H_\mu$, for each $\mu\leq\lambda$, is an ideal of $C_b(X)$ isometrically isomorphic to
\[C_0(Y_\mu)=C_c(Y_\mu)\]
for some locally compact space $Y_\mu$. Furthermore, $\aleph_\lambda$ equals the Lindel\"{o}f number $\ell(X)$ of $X$.
\end{theorem}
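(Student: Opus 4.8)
The plan is to exploit the decomposition $X=\bigcup_{i\in I}X_i$ into pairwise disjoint non-empty Lindel\"of open-closed subspaces, which is available by Parts 1.3 and 1.4 in both cases under consideration, together with the observation that for each subset $J\subseteq I$ the set $H_J=\bigcup_{i\in J}X_i$ is open-closed in $X$, hence $\mathrm{cl}_{\beta X}H_J$ is open-closed in $\beta X$, and $\{f\in C_b(X):\mathrm{supp}(f)\subseteq H_J\}$ is a closed ideal of $C_b(X)$ isometrically isomorphic (via restriction) to $C_b(H_J)$, which in turn is $C(\beta H_J)=C_0(U_J)$ where $U_J=\beta X\setminus(\beta X\setminus\mathrm{cl}_{\beta X}H_J)$ is the open-closed copy of $\beta H_J$ inside $\beta X$. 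First I would reduce to the metrizable-type situation: since $X$ is non-Lindel\"of, $|I|$ is uncountable and $\ell(X)=|I|$, so write $\aleph_\lambda=|I|$ and fix an increasing chain of subsets $I_0\subseteq I_1\subseteq\cdots\subseteq I_\mu\subseteq\cdots\subseteq I$ indexed by $\mu\leq\lambda$ with $|I_0|=\aleph_0$, $|I_\mu|=\aleph_\mu$ for $\mu\geq 1$, and $I_\lambda=I$ (taking unions at limit stages); this is possible because the $\aleph_\mu$ for $\mu\leq\lambda$ exhaust the infinite cardinals up to $\aleph_\lambda=|I|$.

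Next I would set
\[H_\mu=\big\{f\in C_b(X):\mathrm{supp}(f)\subseteq H_{I_\mu}\text{ for some such }I_\mu\big\}\]
for $\mu<\lambda$, more precisely $H_\mu=\{f\in C_b(X):\mathrm{supp}(f)\subseteq\bigcup_{i\in J}X_i\text{ for some }J\subseteq I_\mu\text{ with }|J|\leq\aleph_\mu\}$, so that $H_\mu$ consists of those $f$ whose support is a $\mu$-Lindel\"of (equivalently, here, has cardinality-$\leq\aleph_\mu$-many-summands) union; and $H_\lambda=C_b(X)$. Then $H_\mu=\{f\in C_b(X):\mathrm{supp}(f)\text{ is }\aleph_\mu\text{-Lindel\"of}\}$ because in this setting the support, being closed hence of the form $\bigcup_{i\in K}X_i$ for some $K\subseteq I$, is $\aleph_\mu$-Lindel\"of precisely when $|K|\leq\aleph_\mu$. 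Each $H_\mu$ is an ideal of $C_b(X)$ since multiplying $f$ by any $g\in C_b(X)$ only shrinks $\mathrm{supp}(f)$; it is a subalgebra and a closed subspace (a uniform limit of functions supported in $H_{I_\mu}$ is again supported there, using that $H_{I_\mu}$ is closed and that we may take $|I_\mu|=\aleph_\mu$ fixed), hence a Banach subalgebra. To exhibit the isometric isomorphism with $C_0(Y_\mu)=C_c(Y_\mu)$, I would take $Y_\mu=\bigcup\{\mathrm{cl}_{\beta X}H_J:J\subseteq I_\mu,\ |J|\leq\aleph_\mu\}$ as an open subspace of $\beta X$ (open because each $\mathrm{cl}_{\beta X}H_J$ is open-closed), hence locally compact, and define $\psi_\mu(f)=f_\beta|Y_\mu$; arguing exactly as in the proof of Theorem \ref{UUS}, $\psi_\mu$ is an isometric isomorphism onto $C_c(Y_\mu)$, and $C_c(Y_\mu)=C_0(Y_\mu)$ follows as in Theorem \ref{YUS} because every $\sigma$-compact subset of $Y_\mu$ lies in some single $\mathrm{cl}_{\beta X}H_J$ (a compact subset of $Y_\mu$ is covered by finitely many of the open-closed $\mathrm{cl}_{\beta X}H_J$, and a countable union of the corresponding $J$'s again has cardinality $\leq\aleph_\mu$), which is compact.

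The inclusion $C_0(X)\subsetneqq H_0$ is Theorem \ref{RDF}(2) (respectively its metrizable analog), since $H_0=C_{\mathscr P}(X)$ for $\mathscr P$ the Lindel\"of property and $X$ is non-$\mathscr P$; the strictness of each $H_\mu\subsetneqq H_{\mu+1}$ follows by choosing $x_i\in X_i$ for $i\in I_{\mu+1}\setminus I_\mu$ (a set of cardinality $\aleph_{\mu+1}$), picking $J\subseteq I_{\mu+1}$ with $|J|=\aleph_{\mu+1}$, and letting $f=\chi_{H_J}$, which lies in $H_{\mu+1}\setminus H_\mu$ because its support $H_J$ is not $\aleph_\mu$-Lindel\"of; at a limit stage $\mu$, strictness of $H_\nu\subsetneqq H_\mu$ for $\nu<\mu$ is likewise witnessed by a characteristic function of $\aleph_\nu^{+}$-many summands. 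The step I expect to require the most care is verifying that $H_\mu$ is genuinely closed in the sup-norm — one must check that a Cauchy sequence $(f_n)$ in $H_\mu$ has its uniform limit again supported in $\bigcup_{i\in I_\mu}X_i$, which is immediate since that set is closed, but one must also confirm the limit's support is $\aleph_\mu$-Lindel\"of; this is fine because $\mathrm{supp}(f)\subseteq\mathrm{cl}_X\bigcup_n\mathrm{supp}(f_n)\subseteq\bigcup_{i\in K}X_i$ where $K=\bigcup_n K_n$ with each $|K_n|\leq\aleph_\mu$, so $|K|\leq\aleph_0\cdot\aleph_\mu=\aleph_\mu$. Finally, $H_\lambda=C_b(X)$ because every $f\in C_b(X)$ has $\mathrm{supp}(f)\subseteq X=H_I$ with $|I|=\aleph_\lambda$, and the statement $\aleph_\lambda=\ell(X)$ holds by the choice of $\lambda$ together with $\ell(X)=|I|$.
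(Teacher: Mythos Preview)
Your definition of $H_\mu$ is internally inconsistent, and the inconsistency breaks the theorem. You first \emph{fix} an increasing chain $I_0\subseteq I_1\subseteq\cdots$ with $|I_\mu|=\aleph_\mu$ and set $H_\mu=\{f:\mathrm{supp}(f)\subseteq H_J\text{ for some }J\subseteq I_\mu,\ |J|\leq\aleph_\mu\}$; since $|I_\mu|=\aleph_\mu$ this is simply $\{f:\mathrm{supp}(f)\subseteq H_{I_\mu}\}$. You then assert this equals $\{f:\mathrm{supp}(f)\text{ is }\aleph_\mu\text{-Lindel\"of}\}$, but that is false: any $f$ supported in a single summand $X_i$ with $i\notin I_\mu$ has Lindel\"of (hence $\aleph_\mu$-Lindel\"of) support yet lies outside your $H_\mu$. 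More damagingly, with the fixed-chain definition you do \emph{not} get $C_0(X)\subseteq H_0$: for any $i\notin I_0$, extend a nonzero element of $C_0(X_i)$ by zero to obtain an element of $C_0(X)\setminus H_0$. So the very first inclusion in the chain fails. (The auxiliary claim that a closed subset of $X$ is automatically of the form $\bigcup_{i\in K}X_i$ is also false; closed subsets need not be unions of whole summands.)

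The paper never fixes a chain of index sets. It sets $H_\mu=C_{\mathscr{P}_\mu}(X)$ where $\mathscr{P}_\mu$ is the $\aleph_\mu$-Lindel\"of property, checks the hypothesis of Theorem~\ref{YUS} (that closures of $\aleph_\mu$-Lindel\"of subspaces have $\aleph_\mu$-Lindel\"of neighborhoods, using the decomposition), and then reads off the Banach-algebra structure, the description $H_\mu=\{f:\mathrm{supp}(f)\text{ is }\aleph_\mu\text{-Lindel\"of}\}$, and $C_c(Y_\mu)=C_0(Y_\mu)$ directly from that theorem. If you discard the fixed chain and instead allow $J$ to range over \emph{all} subsets of $I$ with $|J|\leq\aleph_\mu$, your construction becomes correct and is then essentially the paper's argument done by hand: your $Y_\mu=\bigcup\{\mathrm{cl}_{\beta X}H_J:|J|\leq\aleph_\mu\}$ is precisely $\lambda_{\mathscr{P}_\mu}X$ (by the obvious analogue of Lemma~\ref{PDS}), and your verification that every $\sigma$-compact subset of $Y_\mu$ sits inside a single $\mathrm{cl}_{\beta X}H_J$ is the core computation of Theorem~\ref{YUS} specialized to this setting.
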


\begin{proof}
For each ordinal $\mu$, let ${\mathscr P}_\mu$ denote the $\aleph_\mu$-Lindel\"{o}f property, and let
\[H_\mu=C_{{\mathscr P}_\mu}(X).\]

Let $\mu$ be an ordinal. Note that $X$ is normal, and it is locally $\aleph_\mu$-Lindel\"{o}f, as it is locally Lindel\"{o}f. Also, the $\aleph_\mu$-Lindel\"{o}f property, by its definition, is closed hereditary and preserved under countable closed sums. Thus, to use Theorem \ref{YUS}, we only need to show that the closure in $X$ of each $\aleph_\mu$-Lindel\"{o}f subspace of $X$ has a $\aleph_\mu$-Lindel\"{o}f neighborhood in $X$. Assume the representation of $X$ given in Parts 1.3 and 1.4 and note that $\ell(X)=|I|$. Suppose that $A$ is a $\aleph_\mu$-Lindel\"{o}f subspace of $X$. Since
\[\{X_i:i\in I\}\]
is an open cover of $A$, there exists some $J\subseteq I$ with $|J|\leq\aleph_\mu$ such that
\[A\subseteq\bigcup_{i\in J}X_i.\]
If we let
\[U=\bigcup_{i\in J}X_i,\]
then $U$ is a neighborhood of $\mathrm{cl}_XA$ in $X$, and it is $\aleph_\mu$-Lindel\"{o}f, as it is the union of $\aleph_\mu$ number of its Lindel\"{o}f subspaces. By Theorem \ref{YUS} we then know that $H_\mu$ is a Banach subalgebra of $C_b(X)$ isometrically isomorphic to
\[C_0(Y_\mu)=C_c(Y_\mu)\]
for some locally compact space $Y_\mu$; furthermore, we have
\begin{equation}\label{DTT}
H_\mu=\big\{h\in C_b(X):\mathrm{supp}(h)\mbox{ is $\aleph_\mu$-Lindel\"{o}f}\big\}.
\end{equation}
That $H_\mu$ is an ideal of $C_b(X)$ follows easily, as if $h\in H_\mu$, then $\mathrm{supp}(fh)$, for any $f\in C_b(X)$, is $\aleph_\mu$-Lindel\"{o}f, as it is closed in $\mathrm{supp}(h)$ and the latter is $\aleph_\mu$-Lindel\"{o}f, and thus $fh\in H_\mu$.

Note that if $\mu\leq\kappa$ then $H_\mu\subseteq H_\kappa$ by (\ref{DTT}). Let $\lambda$ be such that $\aleph_\lambda=\ell(X)$. Note that $X$ is $\aleph_\lambda$-Lindel\"{o}f (as it is the union of $\aleph_\lambda$ number of its Lindel\"{o}f subspaces). This implies that $H_\lambda=C_b(X)$, as if $f\in C_b(X)$, then $\mathrm{supp}(f)$ is $\aleph_\lambda$-Lindel\"{o}f, as it is closed in $X$. We now show that the inclusions in the chain are all proper. First, note that by Theorems \ref{RDF} and \ref{RTF}, we have $C_0(X)\subsetneqq H_0$. Now, let $\mu<\kappa\leq\lambda$. Let $J\subseteq I$ be of cardinality $\aleph_\kappa$. Then, for the function $f$ which is identical to $1$ on $\bigcup_{i\in J}X_i$ and vanishing elsewhere, we have $f\in H_\kappa$, while $f\notin H_\mu$.
\end{proof}

\begin{remark}\label{HFJ}
Uncountable limit regular cardinals are referred to as {\em weakly inaccessible cardinals}. Weakly inaccessible cardinals cannot be proved to exist within \textsf{ZFC}, though their existence is not known to be inconsistent with \textsf{ZFC}. The existence of weakly inaccessible cardinals is sometimes taken as an additional axiom. Note that weakly inaccessible cardinals are necessarily aleph function's fixed points, that is, if $\lambda$ is a weakly inaccessible cardinal, then $\aleph_\lambda=\lambda$. It is worth noting that in Theorem \ref{GGFD}, if the Lindel\"{o}f number $\ell(X)$ of $X$ is weakly inaccessible, then the chain is of length $\ell(X)$.
\end{remark}

\section{The case when ${\mathscr P}$ is countable compactness}

In this section we determine $\lambda_{\mathscr P}X$ in the case when $X$ is normal and ${\mathscr P}$ is countable compactness. This may also be deduced from Lemma 2.17 of \cite{Ko4} (see also \cite{Ko7}), observing that normality is hereditary with respect to closed subspaces and in the realm of normal spaces countable compactness and pseudocompactness coincide; see Theorems 3.10.20 and 3.10.21 of \cite{E}. (Recall that a completely regular space $X$ is {\em pseudocompact}, if every continuous $f:X\rightarrow\mathbb{R}$ is bounded.) We include the proof here for completeness of results and reader's convenience.

The following result is due to  A.W. Hager and D.G. Johnson in \cite{HJ}; a direct proof may be found in \cite{C}. (See also Theorem 11.24 of \cite{We}.)

\begin{lemma}[Hager--Johnson \cite{HJ}]\label{A}
Let $U$ be an open subspace of a completely regular space $X$. If $\mathrm{cl}_{\upsilon X} U$ is compact then $\mathrm{cl}_X U$ is pseudocompact.
\end{lemma}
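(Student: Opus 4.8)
The plan is to argue by contradiction, and the whole difficulty will be concentrated in one step: promoting a local-finiteness statement from the closed set $\mathrm{cl}_XU$ to all of $X$, which is exactly where the openness of $U$ becomes decisive.

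Write $Z=\mathrm{cl}_XU$. First I would record why compactness of $\mathrm{cl}_{\upsilon X}U$ forces every $f\in C(X)$ to be bounded on $Z$: since $U\subseteq\mathrm{cl}_{\upsilon X}U$ and the latter is compact, hence closed in $\upsilon X$, we get $\mathrm{cl}_{\upsilon X}Z\subseteq\mathrm{cl}_{\upsilon X}U$, so $\mathrm{cl}_{\upsilon X}Z$ is compact; the extension $f^{\upsilon}\colon\upsilon X\to\mathbb{R}$ then maps this compact set onto a bounded subset of $\mathbb{R}$ containing $f(Z)=f^{\upsilon}(Z)$. Thus it suffices to suppose $Z$ is \emph{not} pseudocompact and produce some $f\in C(X)$ unbounded on $Z$.

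Being non-pseudocompact, $Z$ admits a continuous unbounded $h\colon Z\to[0,\infty)$. I would choose $z_1,z_2,\dots\in Z$ with $h(z_{n+1})>h(z_n)+1$ and pairwise disjoint open intervals $J_n\ni h(z_n)$ with $\operatorname{diam}J_n\to0$, and set $V_n=h^{-1}[J_n]\cap U$. Each $V_n$ is non-empty (as $h^{-1}[J_n]$ is a non-empty open subset of $Z$ and $U$ is dense in $Z$), is open in $Z$ (as $h$ is continuous and $U$, being open in $X$, is open in $Z$), satisfies $V_n\subseteq U\subseteq Z$, and the family $\{V_n\}$ is locally finite in $Z$: for $z\in Z$ a short enough interval about $h(z)$ meets only finitely many $J_n$, and its $h$-preimage is a $Z$-neighbourhood of $z$ meeting only finitely many $V_n$.

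Now comes the crucial step. Because each $V_n\subseteq Z=\mathrm{cl}_XU$, the open set $X\setminus Z$ is disjoint from every $V_n$; and for $x\in Z$, taking a $Z$-neighbourhood $N$ of $x$ meeting only finitely many $V_n$ and writing $N=M\cap Z$ with $M$ open in $X$, the identity $V_n\cap M=V_n\cap M\cap Z=V_n\cap N$ shows $M$ is an $X$-neighbourhood of $x$ meeting only finitely many $V_n$. Hence $\{V_n\}$ is a locally finite family of non-empty open subsets of $X$ (this is the step that genuinely uses openness of $U$, and I expect it to be the only real obstacle). Finally I would fix $p_n\in V_n$, use complete regularity to pick continuous $f_n\colon X\to[0,n]$ with $f_n(p_n)=n$ and $f_n\equiv 0$ on $X\setminus V_n$, and set $f=\sum_n f_n$. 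Local finiteness of $\{V_n\}$, hence of $\{\mathrm{cl}_XV_n\}$, makes $f$ well defined and continuous on $X$, while $f(p_n)\ge f_n(p_n)=n$ with $p_n\in Z$ makes $f$ unbounded on $Z$, contradicting the first paragraph. Therefore $\mathrm{cl}_XU$ is pseudocompact.
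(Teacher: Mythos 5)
Your proof is correct and complete. Note that the paper does not prove this lemma at all---it is quoted from Hager--Johnson, with the remark that a direct proof can be found in Comfort's paper---so there is no internal argument to compare against; what you give is essentially the standard direct proof. The two halves fit together properly: compactness of $\mathrm{cl}_{\upsilon X}U$ makes every $f\in C(X)$ bounded on $Z=\mathrm{cl}_XU$ (via the extension $f^{\upsilon}$ over $\upsilon X$), and the family $\{V_n\}$ converts an unbounded $h\in C(Z)$ into an unbounded $f\in C(X)$, which is the contradiction. One remark on emphasis rather than substance: the step you single out as the crux---promoting local finiteness from $Z$ to $X$---actually works for \emph{any} closed subspace $Z$ and any family of subsets of it, and does not use openness of $U$. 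Where openness of $U$ genuinely enters is (i) in making each $V_n$ open in $X$ (writing $h^{-1}[J_n]=W\cap Z$ with $W$ open in $X$ gives $V_n=W\cap U$, open in $X$), which is what allows complete regularity to produce the bump functions $f_n$ vanishing off $V_n$, and (ii) in guaranteeing $V_n\neq\emptyset$ via density of $U$ in $Z$. Both points are present in your write-up, so this is not a gap, only a mislabelling of where the hypothesis does its work.
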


Observe, in the proof of the following, that realcompactness is closed hereditary, a space having a pseudocompact dense subspace is pseudocompact, and that realcompact pseudocompact spaces are compact; see Theorems 3.11.1 and 3.11.4 of \cite{E}.

\begin{lemma}\label{HGA}
Let $U$ be an open subspace of a completely regular space $X$. Then $\mathrm{cl}_{\beta X} U\subseteq\upsilon X$ if and only if $\mathrm{cl}_X U$ is pseudocompact.
\end{lemma}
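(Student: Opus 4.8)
\textbf{Proof proposal for Lemma \ref{HGA}.}
The plan is to prove the two implications separately, using Lemma \ref{A} (Hager--Johnson) for the forward direction and the basic facts about $\upsilon X$ recalled just above for the reverse direction. The key identity to keep in mind throughout is that, since $U$ is open in $X$ and $X$ is dense in both $\upsilon X$ and $\beta X$, we have $\mathrm{cl}_{\beta X}U=\mathrm{cl}_{\beta X}(X\cap U)$ and likewise with $\upsilon X$ in place of $\beta X$; in particular $\mathrm{cl}_{\upsilon X}U=\upsilon X\cap\mathrm{cl}_{\beta X}U$ (recall we may take $\upsilon X\subseteq\beta X$). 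Thus the hypothesis $\mathrm{cl}_{\beta X}U\subseteq\upsilon X$ is equivalent to the equality $\mathrm{cl}_{\beta X}U=\mathrm{cl}_{\upsilon X}U$.

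For the ``only if'' direction, assume $\mathrm{cl}_{\beta X}U\subseteq\upsilon X$. Then $\mathrm{cl}_{\upsilon X}U=\upsilon X\cap\mathrm{cl}_{\beta X}U=\mathrm{cl}_{\beta X}U$, which is a closed subspace of the compact space $\beta X$, hence compact. By Lemma \ref{A}, $\mathrm{cl}_X U$ is pseudocompact.

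For the ``if'' direction, assume $\mathrm{cl}_X U$ is pseudocompact. The goal is to show $\mathrm{cl}_{\beta X}U\subseteq\upsilon X$. Put $F=\mathrm{cl}_{\upsilon X}U$; this is a closed subspace of the realcompact space $\upsilon X$, hence realcompact (realcompactness is closed hereditary). It contains $\mathrm{cl}_XU$ as a dense subspace (again because $U$ is open and $X$ is dense in $\upsilon X$: $\mathrm{cl}_{\upsilon X}U=\mathrm{cl}_{\upsilon X}(X\cap U)=\mathrm{cl}_{\upsilon X}(\mathrm{cl}_XU)$, and $\mathrm{cl}_XU$ is dense in this closure). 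Since a space with a pseudocompact dense subspace is itself pseudocompact, $F$ is pseudocompact. A realcompact pseudocompact space is compact, so $F$ is compact. But then $F$ is a compact, hence closed, subspace of $\beta X$ containing $U$, so $\mathrm{cl}_{\beta X}U\subseteq F=\mathrm{cl}_{\upsilon X}U\subseteq\upsilon X$, as required.

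I do not expect a genuine obstacle here: the forward direction is essentially a direct invocation of Lemma \ref{A} once the closure identity is noted, and the reverse direction is a short chain of standard facts (closed-hereditariness of realcompactness, density transfer of pseudocompactness, and the compactness of realcompact pseudocompact spaces), all of which are cited from \cite{E} in the paragraph preceding the statement. The only point requiring a little care is the repeated use of the elementary fact that for an open set $U$ in $X$ and a dense extension $X\subseteq T$, one has $\mathrm{cl}_TU=\mathrm{cl}_T(X\cap U)$, which lets us pass freely between closures of $U$ and closures of $\mathrm{cl}_XU$ in $\upsilon X$ and $\beta X$.
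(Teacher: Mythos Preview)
Your proof is correct and follows essentially the same route as the paper's: the forward direction is Lemma~\ref{A} applied once one notes that $\mathrm{cl}_{\beta X}U\subseteq\upsilon X$ forces $\mathrm{cl}_{\upsilon X}U=\mathrm{cl}_{\beta X}U$ to be compact, and the reverse direction shows $\mathrm{cl}_{\upsilon X}(\mathrm{cl}_XU)$ is realcompact (closed in $\upsilon X$) and pseudocompact (dense pseudocompact subspace), hence compact, hence closed in $\beta X$ and containing $U$. The only cosmetic difference is that you work with $\mathrm{cl}_{\upsilon X}U$ while the paper writes $\mathrm{cl}_{\upsilon X}A$ with $A=\mathrm{cl}_XU$; these coincide, as you note.
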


\begin{proof}
The first half of the lemma follows from Lemma \ref{A}. For the second half, note that if $A=\mathrm{cl}_X U$ is pseudocompact then so is its closure $\mathrm{cl}_{\upsilon X}A$. But  $\mathrm{cl}_{\upsilon X} A$, being closed in $\upsilon X$, is also realcompact, and thus compact. Therefore $\mathrm{cl}_{\beta X} A\subseteq\mathrm{cl}_{\upsilon X} A$.
\end{proof}

\begin{theorem}\label{PTF}
Let ${\mathscr P}$ be countable compactness. Let $X$ be a normal space. Then
\[\lambda_{\mathscr P}X=\mathrm{int}_{\beta X}\upsilon X.\]
\end{theorem}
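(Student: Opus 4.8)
The plan is to prove the two inclusions separately, in each case passing through the characterization of membership in $\lambda_{\mathscr P}X$ via cozero-sets with countably compact closure and the equivalence ``countably compact $\Leftrightarrow$ pseudocompact'' valid in normal spaces. For the inclusion $\lambda_{\mathscr P}X\subseteq\mathrm{int}_{\beta X}\upsilon X$, I would take a typical basic piece $\mathrm{int}_{\beta X}\mathrm{cl}_{\beta X}C$ of $\lambda_{\mathscr P}X$, where $C\in\mathrm{Coz}(X)$ and $\mathrm{cl}_XC$ has ${\mathscr P}$. Since $X$ is normal, $\mathrm{cl}_XC$ is pseudocompact; write $C=\mathrm{Coz}(g)=g^{-1}[(0,\infty)]$ for some $g\in C_b(X)$ with $0\le g\le 1$, and note $\mathrm{cl}_XC=\mathrm{cl}_X\mathrm{Coz}(g)$. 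The point is that $C$ is an open subspace of $X$ with pseudocompact closure, so Lemma \ref{HGA} gives $\mathrm{cl}_{\beta X}C\subseteq\upsilon X$; hence $\mathrm{int}_{\beta X}\mathrm{cl}_{\beta X}C\subseteq\mathrm{int}_{\beta X}\upsilon X$, and taking the union over all such $C$ yields $\lambda_{\mathscr P}X\subseteq\mathrm{int}_{\beta X}\upsilon X$.

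For the reverse inclusion $\mathrm{int}_{\beta X}\upsilon X\subseteq\lambda_{\mathscr P}X$, I would fix $p\in\mathrm{int}_{\beta X}\upsilon X$ and produce a cozero-set $C$ of $X$ with $\mathrm{cl}_XC$ countably compact and $p\in\mathrm{int}_{\beta X}\mathrm{cl}_{\beta X}C$. Choose (by complete regularity of $\beta X$) a continuous $h:\beta X\to[0,1]$ with $h(p)=0$ and $h\equiv 1$ off $\mathrm{int}_{\beta X}\upsilon X$; let $g=h|X$, so that $g_\beta=h$ up to the usual identification, and set $C=g^{-1}[[0,1/2)]\in\mathrm{Coz}(X)$ (after replacing $g$ by $1-g$ to match the cozero-set convention, or equivalently taking $C=\mathrm{Coz}(1-g^{-1}[[0,1/2)]\text{-type function})$; the clean way is to take $C$ to be the cozero-set of a function vanishing exactly where $g\ge 1/2$). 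Then $\mathrm{cl}_XC\subseteq g^{-1}[[0,1/2]]$, and by Lemma \ref{LKG} (applied to $g$, with $r=1/2$) we get $p\in g_\beta^{-1}[[0,1/2)]\subseteq\mathrm{int}_{\beta X}\mathrm{cl}_{\beta X}C$. It remains to check that $\mathrm{cl}_XC$ is countably compact. By the normality of $X$ and the pseudocompact$=$countably compact equivalence, it suffices to show $\mathrm{cl}_XC$ is pseudocompact; and for this, by Lemma \ref{HGA}, it suffices to show $\mathrm{cl}_{\beta X}C\subseteq\upsilon X$. But $\mathrm{cl}_{\beta X}C=\mathrm{cl}_{\beta X}(g_\beta^{-1}[[0,1/2)]\cap X)=\mathrm{cl}_{\beta X}g_\beta^{-1}[[0,1/2)]\subseteq g_\beta^{-1}[[0,1/2]]=h^{-1}[[0,1/2]]$, which is contained in $\mathrm{int}_{\beta X}\upsilon X\subseteq\upsilon X$ by our choice of $h$. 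This gives $p\in\lambda_{\mathscr P}X$.

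The only delicate point is bookkeeping around the cozero-set versus sublevel-set conventions in Definition \ref{RRA}: $\lambda_{\mathscr P}X$ is a union over cozero-sets $C$ with $\mathrm{cl}_XC$ having ${\mathscr P}$, whereas the natural sets arising from Urysohn-type functions are sublevel sets $g^{-1}[[0,r)]$. These are handled exactly as in the proofs of Lemmas \ref{BBV} and \ref{TES}: a sublevel set $g^{-1}[[0,1/2)]$ is itself the cozero-set of $\max\{1/2-g,0\}$ (or any continuous function positive exactly on it), and its closure sits inside $g^{-1}[[0,1/2]]$, so having ${\mathscr P}$ for the larger closed set transfers to the cozero-set's closure by closed-heredity of countable compactness. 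I expect no real obstacle beyond this routine translation; the substance is entirely carried by the Hager--Johnson lemma (Lemma \ref{HGA}) and Lemma \ref{LKG}, together with the classical facts quoted just before the theorem that in normal spaces countable compactness and pseudocompactness coincide, that pseudocompactness is inherited by dense subspaces from their superspaces and vice versa for closures, and that realcompact pseudocompact spaces are compact.
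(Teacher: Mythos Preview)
Your proposal is correct and follows essentially the same route as the paper's proof: both directions hinge on Lemma~\ref{HGA}, with Lemma~\ref{LKG} supplying the point-capturing cozero-set for the inclusion $\mathrm{int}_{\beta X}\upsilon X\subseteq\lambda_{\mathscr P}X$, and the normal-space equivalence of countable compactness and pseudocompactness doing the translation. Your bookkeeping worry about sublevel sets versus cozero-sets is already resolved by your own observation that $g^{-1}[[0,1/2)]$ is the cozero-set of $\max\{1/2-g,0\}$; the paper simply asserts $C\in\mathrm{Coz}(X)$ without comment.
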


\begin{proof}
If $C\in\mathrm{Coz}(X)$ has countably compact (and thus pseudocompact) closure in $X$, then $\mathrm{cl}_{\beta X} C\subseteq\upsilon X$, by Lemma \ref{HGA}, and then \[\mathrm{int}_{\beta X}\mathrm{cl}_{\beta X} C\subseteq\mathrm{int}_{\beta X}\upsilon X.\]

For the reverse inclusion, let $t\in\mathrm{int}_{\beta X}\upsilon X$. Let $f:\beta X\rightarrow[0,1]$ be continuous with
\[f(t)=0\;\;\;\;\mbox{ and }\;\;\;\;f|(\beta X\backslash\mathrm{int}_{\beta X}\upsilon X)\equiv\mathbf{1}.\]
Then
\[C=X\cap f^{-1}\big[[0,1/2)\big]\in\mathrm{Coz}(X)\]
and $t\in\mathrm{int}_{\beta X}\mathrm{cl}_{\beta X} C$ by Lemma \ref{LKG}. (Note that $(f|X)_\beta=f$, as they coincide on the dense subspace $X$ of $\beta X$.) Also, $\mathrm{cl}_X C$ is pseudocompact by Lemma \ref{HGA}, as
\[\mathrm{cl}_{\beta X}C\subseteq f^{-1}\big[[0,1/2]\big]\subseteq\upsilon X,\]
and thus it is countably compact, since (being closed in $X$) it is normal.
\end{proof}

\section*{Acknowledgement}

The author wishes to thank the referee for reading the manuscript and his/her comments and suggestions.

\bibliographystyle{amsplain}

\begin{thebibliography}{10}

\bibitem{AG} S.K. Acharyya and S.K. Ghosh, Functions in $C(X)$ with support lying on a class of subsets of $X$. \textit{Topology Proc.} \textbf{35} (2010), 127-–148.

\bibitem{ACG} S.K. Acharyya, K.C. Chattopadhyay and P.P. Ghosh, The rings $C_K(X)$ and $C_\infty(X)$ - some remarks. \textit{Kyungpook Math. J.} \textbf{43} (2003), 363--369.

\bibitem{AN} S. Afrooz and M. Namdari, $C_\infty(X)$ and related ideals. \textit{Real Anal. Exchange} \textbf{36} (2010), 45--54.

\bibitem{A} A.R. Aliabad, F. Azarpanah and M. Namdari, Rings of continuous functions vanishing at infinity. \textit{Comment. Math. Univ. Carolin.} \textbf{45} (2004),
    519-–533.

\bibitem{Be} E. Behrends, $M$-structure and the Banach--Stone Theorem. Springer, Berlin, 1979.

\bibitem{Bu} D.K. Burke, Covering properties, in: K. Kunen and J.E. Vaughan (Eds.), Handbook of Set-Theoretic Topology, Elsevier, Amsterdam, 1984, pp. 347--422.

\bibitem{C} W.W. Comfort, On the Hewitt realcompactification of a product space. \textit{Trans. Amer. Math. Soc.} \textbf{131} (1968), 107--118.

\bibitem{E} R. Engelking, General Topology. Second edition. Heldermann Verlag, Berlin, 1989.

\bibitem{HJ} A.W. Hager and D.G. Johnson, A note on certain subalgebras of $C(X)$. \textit{Canad. J. Math.} \textbf{20} (1968), 389--393.

\bibitem{GJ} L. Gillman and M. Jerison, Rings of Continuous Functions. Springer--Verlag, New York--Heidelberg, 1976.

\bibitem{G} C. Good, The Lindel\"{o}f property, in: K.P. Hart, J. Nagata and J.E. Vaughan (Eds.), Encyclopedia of General Topology,  Elsevier, Amsterdam, 2004, pp. 182--184.

\bibitem{Ho} R.E. Hodel, Jr., Cardinal functions I, in: K. Kunen and J.E. Vaughan (Eds.), Handbook of Set-Theoretic Topology, Elsevier, Amsterdam, 1984, pp. 1--61.

\bibitem{Ko3} M.R. Koushesh, Compactification-like extensions. \textit{Dissertationes Math. (Rozprawy Mat.)} \textbf{476} (2011), 88 pp.

\bibitem{Ko4} M.R. Koushesh, The partially ordered set of one-point extensions. \textit{Topology Appl.} \textbf{158} (2011), 509--532.

\bibitem{Ko7} M.R. Koushesh, A pseudocompactification. \textit{Topology Appl.} \textbf{158} (2011), 2191--2197.

\bibitem{Ko6} M.R. Koushesh, The Banach algebra of continuous bounded functions with separable support. \textit{Studia Math.} \textbf{210} (2012), 227--237.

\bibitem{Ko13} M.R. Koushesh, Connectedness modulo a topological property. \textit{Topology Appl.} \textbf{159} (2012), 3417--3425.

\bibitem{Ko11} M.R. Koushesh, Representation theorems for Banach algebras. \textit{Topology Appl.} \textbf{160} (2013), 1781--1793.

\bibitem{Ko12} M.R. Koushesh, Topological extensions with compact remainder. \textit{J. Math. Soc. Japan} (47 pp.) In press.

\bibitem{Ko14} M.R. Koushesh, Continuous mappings with null support. (40 pp.) In preparation.  \verb"arXiv:1302.2235 [math.FA]"

\bibitem{PW} J.R. Porter and R.G. Woods, Extensions and Absolutes of Hausdorff Spaces. Springer--Verlag, New York, 1988.

\bibitem{Steph} R.M. Stephenson, Jr., Initially $\kappa$-compact and related spaces, in: K. Kunen and J.E. Vaughan (Eds.), Handbook of Set-Theoretic Topology, Elsevier, Amsterdam, 1984, pp. 603--632.

\bibitem{T} A. Taherifar, Some generalizations and unifications of $C_K(X)$, $C_\psi(X)$ and $C_\infty(X)$. (13 pp.) \verb"arXiv:1210.6521 [math.GN]"

\bibitem{Va} J.E. Vaughan, Countably compact and sequentially compact spaces, in: K. Kunen and J.E. Vaughan (Eds.), Handbook of Set-Theoretic Topology, Elsevier, Amsterdam, 1984, pp. 569--602.

\bibitem{W} N.M. Warren, Properties of Stone--\v{C}ech compactifications of discrete spaces. \textit{Proc. Amer. Math. Soc.} \textbf{33} (1972), 599-–606.

\bibitem{We} M.D. Weir, Hewitt--Nachbin Spaces. American Elsevier, New York, 1975.

\end{thebibliography}

\end{document}